\newtheorem{Theorem}{Theorem}[section]
\newtheorem{Definition}{Definition}[section]
\newtheorem{Lemma}{Lemma}[section]
\newtheorem{Example}{Example}[section]
\newtheorem{Remark}{Remark}[section]
\title{Two-point boundary value problems and exact controllability for several kinds of linear and nonlinear wave
equations}
\author{De-Xing Kong\footnote{Department of Mathematics, Zhejiang University,
Hangzhou 310027, China.}  $\quad$ and $\quad$ Qing-You
Sun\footnote{Center of Mathematical Sciences, Zhejiang University,
Hangzhou 310027, China.}\\}
\date{ }
\begin{document}
\maketitle

\noindent{\bf Abstract}: In this paper we introduce some new
concepts for second-order hyperbolic equations: two-point boundary
value problem, global exact controllability and exact
controllability. For several kinds of important linear and nonlinear
wave equations arising from physics and geometry, we prove the
existence of smooth solutions of the two-point boundary value
problems and show the global exact controllability of these wave
equations. In particular, we investigate the two-point boundary
value problem for one-dimensional wave equation defined on a closed
curve and prove the existence of smooth solution which implies the
exact controllability of this kind of wave equation. Furthermore,
based on this, we study the two-point boundary value problems for
the wave equation defined on a strip with Dirichlet or Neumann
boundary conditions and show that the equation still possesses the
exact controllability in these cases. Finally, as an application, we
introduce the hyperbolic curvature flow and obtain a result
analogous to the well-known theorem of Gage and Hamilton for the
curvature flow of plane curves.

\vskip 10mm

\noindent{\bf Key words and phrases}: wave equation, two-point
boundary value problem, global exact controllability, exact
controllability, Dirichlet boundary condition, Neumann boundary
condition, hyperbolic curvature flow, Gage-Hamilton's theorem.

\vskip 6mm

\noindent{\bf 2000 Mathematics Subject Classification}: 35L05,
 93B05, 93C20.

\newpage
\baselineskip=7mm

\section{Introduction}\label{sec:1}

Consider the following seconder-order hyperbolic partial
differential equation
\begin{equation} \label{eq:1-1.0}
\mathscr{P}(t,x,u,Du,D^2u)=0,
\end{equation}
where $t$ is the time variable, $x=(x_1,\cdots x_n)$ stand for the
spacial variables, $u=u(t,x)$ is the unknown function, $\mathscr{P}$
is a given smooth function of the independent of variables
$t,x_1,\cdots,x_n$, the unknown function $u$, the first-order
partial derivatives $Du=(u_t,u_{x_1},\cdots,u_{x_n})$ and the
second-order partial derivatives $D^2u=(u_{tt},u_{tx_1},\cdots)$.
Since we only consider the hyperbolic case, the initial data
associated with the equation \eqref{eq:1-1.0} read
\begin{equation}\label{eq:1-1.00}
t=0:\;\;u=u_0(x),\quad u_t=u_1(x),
\end{equation}
where $u_0(x)$ and $u_1(x)$ are two given functions which stand for
the initial position and the initial velocity, respectively. The
equation \eqref{eq:1-1.0} and the initial data \eqref{eq:1-1.00}
constitute the famous Cauchy problem. Another important problem
related to \eqref{eq:1-1.0} is the following so-called {\it
two-point boundary value problem} for the equation \eqref{eq:1-1.0}:

\vskip 3mm

\noindent{\bf Two-point Boundary Value Problem (TBVP):} $\;${\it
Given two suitable smooth functions $u_0(x),u_T(x)$ and a positive
constant $T$, can we find a $C^2$-smooth function $u=u(t,x)$ defined
on the strip $[0,T]\times\mathds{R}^{n}$ such that the function
$u=u(t,x)$ satisfies the equation \eqref{eq:1-1.0} on the domain
$[0,T]\times\mathds{R}^{n}$, the initial condition
\begin{equation}\label{eq:1-1.01}
u(0,x)=u_0(x),\quad\forall\;x\in \mathds{R}^{n}
\end{equation}
and the terminal condition
\begin{equation}\label{eq:1-1.02}
u(T,x)=u_T(x),\quad\forall\;x\in \mathds{R}^{n}?
\end{equation}}

\noindent{\bf Another Statement of TBVP:} $\;${\it Given two
suitable smooth functions $u_0(x),u_T(x)$ and a positive constant
$T$, can we find an initial velocity $u_t(0,x)=u_1(x)$ such that the
Cauchy problem \eqref{eq:1-1.0}-\eqref{eq:1-1.00} has a solution
$u=u(t,x)\in C^2([0,T]\times\mathds{R}^{n})$ which satisfies the
terminal condition \eqref{eq:1-1.02}?}

The system \eqref{eq:1-1.0} together with
\eqref{eq:1-1.01}-\eqref{eq:1-1.02} can be viewed as a distributed
parameter control system when the initial velocity function $u_1(x)$
is considered as a control input.

We now give the following definition.

\begin{Definition}
For any given $T>0$, if the TBVP \eqref{eq:1-1.0},
\eqref{eq:1-1.01}-\eqref{eq:1-1.02} has a $C^2$ solution on the
strip $[0,T]\times\mathds{R}^{n}$, then the equation
\eqref{eq:1-1.0} is called to possess the global exact
controllability; If the TBVP \eqref{eq:1-1.0},
\eqref{eq:1-1.01}-\eqref{eq:1-1.02} admits a $C^2$ solution on the
strip $[0,T]\times\mathds{R}^{n}$ for some given $T>0$, then we say
that the equation \eqref{eq:1-1.0} possesses the exact
controllability.
\end{Definition}

\begin{Remark} \label{rem:1-1.0} For the system of
seconder-order hyperbolic partial differential equations, we have
similar concepts and definitions; For higher-order hyperbolic
partial differential equations, we have a similar discussion.
\end{Remark}

The wave equations play an important role in both theoretical and
applied fields, they include two classes: linear wave equations and
nonlinear wave equations. The classical wave equation is an
important second-order linear partial differential equation of
waves, such as sound waves, light waves and water waves. It arises
in fields such as acoustics, electromagnetics, fluid dynamics, and
general relativity. Historically, the problem of a vibrating string
such as that of a musical instrument was studied by Jean le Rond
d'Alembert, Leonhard Euler, Daniel Bernoulli, and Joseph-Louis
Lagrange.

The wave equation is the prototypical example of a hyperbolic
partial differential equation. In its simplest form, the wave
equation refers to a unknown scalar function $y=y(t,x)$ which
satisfies
\begin{equation} \label{eq:1-1.1}
y_{tt}-c^2\Delta y=0,
\end{equation}
where ${\displaystyle\Delta=\sum^n_{i=1}\frac{\partial^2}{\partial
x_i^2}}$ denotes the Laplacian and $c$ is a fixed constant which
stands for the propagation speed of the wave. One of aims of the
present is to investigate the TBVP for \eqref{eq:1-1.1}, more
precisely we study the following TBVP:

\vskip 3mm

\noindent{\bf TBVP for \eqref{eq:1-1.1}:} $\;${\it Given two
functions $f(x),g(x)\in C^{[n/2]+2}(\mathds{R}^{n})$ and a positive
constant $T$, can we find a $C^2$-smooth function $y=y(t,x)$ defined
on the strip $[0,T]\times\mathds{R}^{n}$ such that the function
$y=y(t,x)$ satisfies the equation \eqref{eq:1-1.1} on the domain
$[0,T]\times\mathds{R}^{n}$, the initial condition
\begin{equation}\label{eq:1-1.2}
y(0,x)=f(x),\quad\forall\;x\in \mathds{R}^{n}
\end{equation}
and the terminal condition
\begin{equation}\label{eq:1-1.3}
y(T,x)=g(x),\quad\forall\;x\in \mathds{R}^{n}?
\end{equation}}

\noindent{\bf Another Statement:} $\;${\it Given two functions
$f(x),g(x)\in C^{[n/2]+2}(\mathds{R}^{n})$ and a positive constant
$T$, can we find an initial velocity $v=v(x)$ such that the Cauchy
problem for the wave equation \eqref{eq:1-1.1} with the initial data
\begin{equation}\label{eq:1-1.4}
t=0:\;\;y=f(x),\quad y_t=v(x)
\end{equation}
has a solution $y=(t,x)\in C^2([0,T]\times\mathds{R}^{n})$ which
satisfies the terminal condition \eqref{eq:1-1.3}?}

In this paper we shall show the global exact controllability of the
equation \eqref{eq:1-1.1} and some nonlinear wave equations arising
from geometry and the theory of relativistic strings.

It is well-known that there are many deep and beautiful results on
the TBVP for ordinary differential equations, however, according to
the authors' knowledge, few of results on the TBVP for hyperbolic
equations, even for (linear or nonlinear) wave equations have been
known. Therefore, we can say that the result presented in this paper
is the first result on this research topic.

On the other hand, the study on {\it boundary} control problems for
hyperbolic systems was initiated by D.L. Russell in the 1960s. In
\cite{Ru1}, using the characteristic method, he showed that a class
of $n\times n$ first order linear hyperbolic systems is exactly
boundary controllable. This work led to an intensive investigation
of controllability and stabilization of linear hyperbolic systems
for more than 30 years. The literature pertaining to this study is
now absolutely enormous, we refer to two excellent review papers
Russell \cite{Ru2} and Lions \cite{lions}. However, while it may be
fair to say that the study of boundary control of linear hyperbolic
systems is now nearly complete, the study of nonlinear hyperbolic
systems is still vastly open.  Up to now, some results on the exact
boundary controllability of (abstract) semilinear wave equations
have been obtained (see \cite{chen}, \cite{che}, \cite{fa},
\cite{LT1}-\cite{LT2}, \cite{zu1}-\cite{zu3} and references cited
therein). As for boundary control of quasilinear hyperbolic systems,
there have been few results so far. Motivated by Ruessell's work,
Cirin\`a \cite{ci} studied boundary control of general quasilinear
hyperbolic systems. Using a different approach from that of Russell,
he proved that the system is {\it locally} exactly boundary
controllable in the sense that the $C^1$ norms of both initial and
terminal states are required to be small.

All the above results are obtained under the assumption that the
initial and terminal states are smooth, and they are discussed in
the framework of classical solutions. For the global exact
controllability of system (1.1) in the case that the initial and
terminal states maybe contain discontinuity points of the first
kind, up to now only a few of results have been known. In Kong
\cite{kong1}, the author investigated the global exact boundary
controllability of $2\times 2$ quasilinear hyperbolic system of
conservation laws with linearly degenerate characteristics and
proved that the system with nonlinear boundary conditions is
globally exactly boundary controllable in the class of piecewise
$C^1$ functions. Later, by a new constructive method, Kong and Yao
reproved the global exact boundary controllability of a class of
quasilinear hyperbolic systems of conservation laws with linearly
degenerate characteristics, shown that the system with nonlinear
boundary conditions is globally exactly boundary controllable in the
class of piecewise $C^1$ functions, in particular, gave the optimal
control time of the system (see \cite{kong2}).

Here we would like to remark that the TBVP and the boundary control
problems are essentially different two kinds of problems. Both of
them play an important role in both theoretical and applied aspects.

We now state the first result in this paper.
\begin{Theorem} \label{thm:1-1.1}
The TBVP \eqref{eq:1-1.1}-\eqref{eq:1-1.3} admits a $C^2$-smooth
solution $y = y(t,x)$ defined on the strip
$[0,T]\times\mathds{R}^n$.
\end{Theorem}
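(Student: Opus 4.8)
The plan is to adopt the equivalent ``Another Statement'' of the problem: instead of constructing $y$ on the whole strip at once, I would prescribe an initial velocity $y_t(0,\cdot)=v$, let $y$ be the unique solution of the Cauchy problem with data $(f,v)$, and then choose $v$ so that $y(T,\cdot)=g$. Writing the Cauchy solution through its propagators as $y(t,\cdot)=C(t)f+S(t)v$ --- where $C(t)$ and $S(t)$ are the cosine and sine operators of \eqref{eq:1-1.1}, acting in Fourier variables as multiplication by $\cos(ct|\xi|)$ and $\sin(ct|\xi|)/(c|\xi|)$ --- the terminal condition collapses to the single operator equation
\[
S(T)\,v \;=\; g-C(T)f \;=:\; h .
\]
Because a $C^2$ solution on the strip is completely determined by its Cauchy data at $t=0$, solvability of this equation for a sufficiently smooth $v$ is in fact \emph{equivalent} to the assertion of the theorem. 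Everything else is the standard representation formula for the wave equation, and this is exactly why the data are taken in $C^{[n/2]+2}$: the Kirchhoff--Poisson formulas cost about $[n/2]$ derivatives, and two more are needed to land in $C^2$.

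The crux, and the step I expect to be the real obstacle, is the inversion of $S(T)$. One cannot merely divide by the symbol in Fourier space: $\sin(cT|\xi|)/(c|\xi|)$ vanishes on the concentric spheres $|\xi|=k\pi/(cT)$, $k=1,2,\dots$, so on tempered distributions $S(T)$ has an enormous cokernel and the generic $h$ is \emph{not} in its range. The way out is to leave the tempered category and build $v$ in physical space as a smooth, possibly non-decaying, function. One space dimension shows the mechanism cleanly: with $V(x)=\int_0^x v$, d'Alembert's formula reduces $S(T)v=h$ to the scalar difference equation
\[
V(x+cT)-V(x-cT)=2c\,h(x),
\]
which I would solve by prescribing $V$ freely on one interval of length $2cT$, matching the full jets of $V$ at its two endpoints (which can be prescribed independently by Borel's theorem) and propagating by the recursion; then $v=V'$ is the required velocity, and it is as regular as $f$ and $g$.

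For $n>1$ I would descend to this one-dimensional situation by a plane-wave / Radon-type decomposition: since the Radon transform $\mathcal{R}$ intertwines $\Delta$ with $\partial_s^2$, for each direction $\omega\in S^{n-1}$ the profile $Y(\omega,s,t):=\mathcal{R}[y(t,\cdot)](\omega,s)$ obeys a one-dimensional wave equation in $(t,s)$ with data $\mathcal{R}f$ and $\mathcal{R}g$; one then solves the resulting $\omega$-family of one-dimensional problems and reconstructs $v$ and $y$ by inversion. The delicate points --- and where the argument must be done with care rather than by the formula above --- are to carry out the one-dimensional construction smoothly and consistently in the parameter $\omega$, and to guarantee that the solved profiles $Y(\omega,\cdot,t)$ satisfy the parity and moment (Helgason--Ludwig) conditions characterising the range of $\mathcal{R}$, so that the reconstructed objects are genuine functions of the claimed regularity and not merely formal inverses. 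This last point, together with the derivative bookkeeping that the hypothesis $f,g\in C^{[n/2]+2}$ is designed to absorb, is the heart of the proof.
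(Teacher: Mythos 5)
Your one\-dimensional argument is, in substance, exactly the paper's (Theorem \ref{thm:2.1}): after subtracting the free evolution of $f$, the terminal condition becomes the difference equation $V(x+T)-V(x-T)=2\tilde f(x)$ for an antiderivative $V$ of the unknown velocity, and the paper solves it precisely by your recipe --- a seed function on $[-T,T]$ subject to finitely many endpoint matching conditions (its \eqref{eq:2.8}; since only $C^2$ is claimed, matching the value and two derivatives suffices, so Borel's theorem is not needed), propagated to all of $\mathds{R}$ by the recursion (its \eqref{eq:2.9}). The divergence is in the passage to $n\geq 2$, and that is where your proposal has a genuine gap. The paper reduces to one dimension by \emph{spherical means}: for odd $n=2k+1$ the transform $w(t,r)=\left(\frac{1}{r}\partial_r\right)^{k-1}\left(r^{2k-1}\mathcal{A}_r y\right)$ satisfies $w_{tt}-w_{rr}=0$, so Theorem \ref{thm:2.1} applies for each fixed $x$ and $y$ is recovered from $y(t,x)=\lim_{r\to 0}w(t,r)/(c_0 r)$; even $n$ is handled by Hadamard descent. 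Spherical means are averages over compact spheres, hence defined for \emph{every} continuous choice of data.

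Your Radon\-transform reduction, by contrast, requires integrability along hyperplanes, and the theorem assumes no decay whatsoever: $f\equiv 1$ is admissible data for which $\mathcal{R}f$ does not exist. Moreover the obstruction is structural, not merely definitional. Suppose the data were even compactly supported and some control $v$ lay in a class where the Radon machinery works (say Schwartz, so that inversion and the Helgason--Ludwig range theorem are available). Then each profile $\mathcal{R}v(\omega,\cdot)$ would be a decaying solution of your difference equation $V(s+cT)-V(s-cT)=2c\,h_\omega(s)$; telescoping this identity over $s\mapsto s+2cT$ and letting the endpoints go to $\pm\infty$ forces the $2cT$\-periodization $\sum_{k\in\mathds{Z}}h_\omega\big(s_0+2kcT\big)$ to be \emph{constant in} $s_0$, which fails for generic data (take a profile $h_\omega\geq 0$, not identically zero, supported in an interval of length less than $2cT$). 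So for generic data every admissible velocity is necessarily non\-decaying --- exactly as in your own one\-dimensional construction, which produces an eventually periodic $v$ --- and therefore lies outside the class on which $\mathcal{R}$, its inversion, and the range conditions operate. The step you defer as ``the heart of the proof'' is thus not a technicality to be supplied later: in the framework you set up it cannot be carried out. (The paper's route faces a milder cousin of this issue --- its 1D solutions should be compatible with being spherical means, a point it treats tersely --- but its transform is at least defined for all admissible data, which is what makes the spherical\-means\-plus\-descent reduction viable where the plane\-wave one is not.)
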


Theorem \ref{thm:1-1.1} implies that the wave equation
\eqref{eq:1-1.1} possesses the global exact controllability.

\begin{Remark} \label{rem:1-1.1}
The solution of the TBVP of \eqref{eq:1-1.1}-\eqref{eq:1-1.3} does
not possess the uniqueness (see Remarks \ref{rem:2.2} and
\ref{rem:3.2} for the details).
\end{Remark}

\begin{Remark} \label{rem:1-1.2}
Theorem \ref{thm:1-1.1} can be generalized to the case of
inhomogeneous wave equations, i.e.,
\begin{equation} \label{eq:1-1.5}
y_{tt}-c^2\Delta y=F(t,x),
\end{equation}
where $F(t,x)$ is a given function which stands for the source term
of the system.
\end{Remark}

\begin{Remark} \label{rem:1-1.3}
We have similar results for some nonlinear wave equations including
a wave map equation arising from geometry and the equations for the
motion of relativistic strings in the Minkowski space-time
$\mathds{R}^{1+n}$ (see Section \ref{sec:7} for the details). These
nonlinear wave equations play an important role in both mathematics
and physics.
\end{Remark}

However, some problems arising from engineering, control theory etc.
can be reduced to the TBVP for wave equations defined on a closed
curve, say, a circle. The typical example is the vibration of a
closed elastic string. In this case, since the wave equation is
defined on a closed curve and then the solution must possess the
periodicity, the method used in the proof of Theorem \ref{thm:1-1.1}
will no longer work. It needs some new ideas and new technologies to
study such a kind of problems.

In this paper we also investigate this kind of problems mentioned
above. For simplicity, we consider the following TBVP\footnote{In
fact, by scaling, any wave equation $y_{tt}-c^2y_{\theta\theta}=0$
with the propagation speed $c$ can be reduced to the wave equation
in \eqref{eq:1-2.5}. Therefore, in this paper, without loss of
generality, we only consider the wave equation with the propagation
speed $c=1$.}

\begin{equation} \label{eq:1-2.5}
\begin{cases}
y_{tt}-y_{\theta\theta}=0,\quad \forall\;(t,\theta)\in [0,T]\times \mathds{R},\\
y(0,\theta)=f(\theta),\quad \forall\;\theta\in \mathds{R},\\
y(T,\theta)=g(\theta),\quad \forall\;\theta\in \mathds{R},
\end{cases}
\end{equation}
where $y=y(t,\theta)$ is the unknown function of the variables $t$
and $\theta$, $T$ is a given positive constant, $f(\theta)$ and
$g(\theta)$ are two given periodic $C^3$ functions of $\theta\in
\mathds{R}$, say, the period is $L$, in which $L$ is a positive real
number. The second result in the present paper is the following
theorem.

\begin{Theorem} \label{thm:1-2.1}
The TBVP \eqref{eq:1-2.5} admits a global $L$-periodic $C^2$
solution $y = y(t,\theta)$ defined on the domain
$[0,T]\times\mathds{R}$, provided that $\displaystyle\frac{T}{L}$ is
a rational number with $\displaystyle\frac{2T}{L}\not\in
\mathds{N}$.
\end{Theorem}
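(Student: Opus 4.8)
The plan is to reduce the problem to a single scalar functional equation on the circle and then to exploit the rationality of $T/L$ to make that equation finite and explicitly solvable. First I would represent any $C^2$ solution of $y_{tt}-y_{\theta\theta}=0$ by d'Alembert's formula $y(t,\theta)=\phi(\theta+t)+\psi(\theta-t)$ with $\phi,\psi\in C^2$. Requiring $y$ to be $L$-periodic in $\theta$ for every $t$ forces $\phi$ and $\psi$ to be periodic up to opposite linear drifts,
\begin{equation*}
\phi(\xi)=\alpha\xi+\tilde\phi(\xi),\qquad \psi(\eta)=-\alpha\eta+\tilde\psi(\eta),
\end{equation*}
with $\tilde\phi,\tilde\psi$ $L$-periodic and $\alpha$ a constant. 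The initial condition gives $\tilde\phi+\tilde\psi=f$, while the terminal condition, after eliminating $\tilde\psi=f-\tilde\phi$ and shifting $\theta\mapsto\theta+T$, collapses to the single functional (cohomological) equation
\begin{equation*}
\tilde\phi(\theta+2T)-\tilde\phi(\theta)=H(\theta),\qquad H(\theta):=g(\theta+T)-f(\theta)-2\alpha T,
\end{equation*}
where $\alpha$ is fixed by the requirement that $H$ have zero mean, namely $\alpha=(\bar g-\bar f)/(2T)$.

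Next I would bring in the hypothesis. Writing $T/L=p/q$ in lowest terms, the assumption $2T/L\notin\mathds{N}$ says exactly $q\ge 3$, and the shift $S:\theta\mapsto\theta+2T$ descends to a rotation of $\mathds{R}/L\mathds{Z}$ of finite order $N=q/\gcd(q,2)$, so $S^{N}=\mathrm{id}$ on the circle. Telescoping the equation around this finite orbit produces the solvability condition $\sum_{k=0}^{N-1}H(\theta+2kT)\equiv 0$; conversely, whenever it holds, the explicit finite sum
\begin{equation*}
\tilde\phi(\theta)=-\frac{1}{N}\sum_{k=0}^{N-1}\sum_{j=0}^{k-1}H(\theta+2jT)
\end{equation*}
is $L$-periodic and solves the equation, as a direct telescoping check shows. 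One then recovers $\tilde\psi=f-\tilde\phi$, hence $\phi,\psi$, and finally $y(t,\theta)=2\alpha t+\tilde\phi(\theta+t)+\tilde\psi(\theta-t)$.

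Because $\tilde\phi$ is a finite sum of translates of $H$, and $H\in C^{3}$ by the regularity of $f,g$, the resulting $y$ is automatically $C^{3}$ (hence $C^{2}$) and $L$-periodic, so no derivatives are lost. This is precisely where rationality is indispensable: in Fourier variables the equation reads $(e^{4\pi i nT/L}-1)\,\hat{\tilde\phi}_{n}=\hat H_{n}$, and on the controllable modes one has the uniform lower bound $|\,e^{4\pi i nT/L}-1\,|=2|\sin(2\pi np/q)|\ge 2\sin(\pi/q)>0$; an irrational $T/L$ would reintroduce a small-divisor problem and destroy smoothness.

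I expect the main obstacle to be the solvability condition $\sum_{k=0}^{N-1}H(\theta+2kT)\equiv 0$ itself. Its mean is absorbed by the choice of $\alpha$, but its higher harmonics are supported exactly on the frequencies $n$ that are multiples of $N$, equivalently the resonant modes with $\sin(2\pi nT/L)=0$, for which the terminal value $y(T,\cdot)$ is rigidly pinned to the data $f$ and cannot be steered by any choice of initial velocity. The delicate point of the argument is therefore to control these resonant frequencies, i.e.\ to show that the finite-orbit averages of $f$ and $g$ agree on every multiple of $N$, so that the explicit primitive above is legitimate and the assembled $y$ meets both end conditions. The condition $q\ge 3$ is what guarantees that the non-resonant, freely invertible modes are cofinite, which is exactly what isolates the remaining resonant constraints as the crux of the proof.
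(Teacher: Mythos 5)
Your route is genuinely different in form from the paper's. The paper works in Fourier space throughout: it expands $y(t,\theta)$ in an $L$-periodic series, solves the mode ODEs $a_k''+(2k\pi/L)^2a_k=0$, matches the two end conditions, and then must prove $C^2$ convergence of the resulting series, which it does by combining the non-resonant lower bound $|\sin(kp\pi/q)|\ge\sin(\pi/q)$ of \eqref{eq:4.22} with the coefficient decay $\sum_k k^2|A_k|<\infty$ for $C^3$ data (Lemma \ref{lem:4.1}). Your cohomological equation $\tilde\phi(\theta+2T)-\tilde\phi(\theta)=H(\theta)$ is the physical-space counterpart of the same linear problem, but because the shift by $2T$ has finite order $N$ on $\mathds{R}/L\mathds{Z}$, your candidate solution is a finite sum of translates of $H$: all convergence and small-divisor analysis disappears, and you even retain $C^3$ regularity with no loss of derivatives. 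That is a real gain in economy over the paper.

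However, as a proof of Theorem \ref{thm:1-2.1} your proposal is incomplete, and the step you defer --- verifying the solvability condition $\sum_{k=0}^{N-1}H(\theta+2kT)\equiv0$ --- cannot be carried out, because it is false for general data. The orbit average of $H$ is exactly the resonant part of the data: on frequencies divisible by $N$ the terminal value of each mode is $\pm$ its initial value regardless of the initial velocity, so it is pinned by $f$ and cannot be steered to match an arbitrary $g$. A concrete counterexample satisfying every hypothesis of the theorem: $L=2\pi$, $T=\pi/3$ (so $T/L=1/6$ is rational and $2T/L=1/3\notin\mathds{N}$), $f\equiv0$, $g(\theta)=\cos(3\theta)$. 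For any $L$-periodic $C^2$ solution, the third cosine coefficient satisfies $a_3''+9a_3=0$ with $a_3(0)=0$, hence $a_3(\pi/3)=B\sin\pi=0\neq1$; no solution exists. So the statement as printed, for arbitrary $C^3$ periodic $f,g$, is false.

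You should know that the paper's own proof founders at exactly the point you flagged, only less visibly. Writing $2T/L=p/q$ irreducible, the hypothesis $2T/L\notin\mathds{N}$ gives $q\ge2$ but does not eliminate resonances: $2kT/L\in\mathds{N}$ for every $k$ divisible by $q$. For such $k$ the paper simply sets $\beta_k=\bar\beta_k=0$ in \eqref{eq:4.19}--\eqref{eq:4.20}; but then the second equations of \eqref{eq:4.16}--\eqref{eq:4.17} degenerate to $\alpha_k\cos(2k\pi T/L)=\frac{2}{L}\int_0^Lg(x)\cos\left(\frac{2k\pi}{L}x\right)dx$, i.e.\ they force the $k$-th Fourier coefficients of $g$ to equal $\pm$ those of $f$ --- a constraint on the data that no choice of the free constants can enforce, and which fails at $k=3$ in the counterexample above. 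The paper records this phenomenon in Remark \ref{rem:4.2} only for the fully resonant case $2T/L\in\mathds{N}$, overlooking that it persists for every rational $T/L$. In short: your reduction is sound, your identification of the resonant constraint as the crux is exactly right, and what is missing is not an idea you failed to have but a hypothesis the theorem failed to state (compatibility of the resonant modes of $f$ and $g$, which is precisely your orbit-average condition).
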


\begin{Remark} \label{rem:1-2.1}
Theorem \ref{thm:1-2.1} implies that the wave equation in
\eqref{eq:1-2.5} possesses the exact controllability. On the other
hand, in general, the solution of the TBVP \eqref{eq:1-2.5} is not
unique (see the proof of Theorem \ref{thm:1-2.1} in Section
\ref{sec:4} for the details).
\end{Remark}

\begin{Remark} \label{rem:1-2.2}
In Theorem \ref{thm:1-2.1}, if $\displaystyle\frac{2T}{L}\in
\mathds{N}$, then there exists a relationship between $f(\theta)$
and $g(\theta)$. This means that $f(\theta)$ and $g(\theta)$ can not
be given arbitrarily. See Remark \ref{rem:4.2} for the details.
\end{Remark}

As a consequence, we consider the following TBVP defined on a circle
\begin{equation} \label{eq:1-2.6}
\begin{cases}
y_{tt}-y_{\theta\theta}=0,\quad \forall\;(t,\theta)\in [0,T]\times \mathbb{S}^1,\\
y(0,\theta)=f(\theta),\quad \forall\;\theta\in \mathbb{S}^1,\\
y(T,\theta)=g(\theta),\quad \forall\;\theta\in \mathbb{S}^1,
\end{cases}
\end{equation}
where $\mathbb{S}^1$ stands for the unit circle, $f(\theta)$ and
$g(\theta)$ are two given $C^3$ functions of
$\theta\in\mathbb{S}^1$. By Theorem \ref{thm:1-2.1}, we have

\begin{Theorem} \label{thm:1-2.2}
The TBVP \eqref{eq:1-2.6} admits a global $C^2$ solution $y =
y(t,\theta)$ defined on the cylinder $[0,T]\times\mathbb{S}^1$,
provided that $\displaystyle\frac{T}{2\pi}$ is a rational number and
$\displaystyle\frac{T}{\pi}\not\in \mathds{N}$.
\end{Theorem}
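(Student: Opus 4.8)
The plan is to deduce Theorem \ref{thm:1-2.2} directly from Theorem \ref{thm:1-2.1} by regarding the unit circle $\mathbb{S}^1$ as the quotient $\mathds{R}/2\pi\mathds{Z}$, that is, as a closed curve of length $L=2\pi$. First I would set up the identification between functions on $\mathbb{S}^1$ and $2\pi$-periodic functions on $\mathds{R}$: the given data $f,g\in C^3(\mathbb{S}^1)$ lift to $2\pi$-periodic functions $\tilde f,\tilde g\in C^3(\mathds{R})$ via the covering map $\theta\mapsto e^{i\theta}$, and conversely any $2\pi$-periodic $C^2$ function of $(t,\theta)$ on the strip $[0,T]\times\mathds{R}$ descends to a well-defined $C^2$ function on the cylinder $[0,T]\times\mathbb{S}^1$. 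Since the wave operator $\partial_t^2-\partial_\theta^2$ commutes with the translation $\theta\mapsto\theta+2\pi$, solving \eqref{eq:1-2.6} on the cylinder is equivalent to solving the periodic problem \eqref{eq:1-2.5} with period $L=2\pi$ and checking that the solution so obtained is genuinely $2\pi$-periodic.

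Next I would verify that the hypotheses translate exactly. Substituting $L=2\pi$ into the hypothesis of Theorem \ref{thm:1-2.1}, the condition that $T/L$ be rational with $2T/L\notin\mathds{N}$ becomes the condition that $T/(2\pi)$ be rational with $T/\pi\notin\mathds{N}$, which is precisely the hypothesis imposed in the present theorem. Hence Theorem \ref{thm:1-2.1} applies to the lifted data $\tilde f,\tilde g$ and produces a global $2\pi$-periodic $C^2$ solution $y=y(t,\theta)$ of $y_{tt}-y_{\theta\theta}=0$ on $[0,T]\times\mathds{R}$ satisfying $y(0,\theta)=\tilde f(\theta)$ and $y(T,\theta)=\tilde g(\theta)$.

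Finally I would descend this solution to the cylinder. Because $y$ is $2\pi$-periodic in $\theta$ and of class $C^2$ on the strip, it is single-valued and $C^2$ on $[0,T]\times\mathbb{S}^1$, solves the wave equation there, and matches $f$ at $t=0$ and $g$ at $t=T$, which is the desired conclusion. The only point that genuinely requires care — and so the nearest thing to an obstacle — is the compatibility of periods: one must confirm that the solution supplied by Theorem \ref{thm:1-2.1} has period exactly $L=2\pi$ (rather than some multiple of it), so that it is well defined on $\mathbb{S}^1$ rather than on a longer cover. This is guaranteed by the $L$-periodicity assertion already contained in Theorem \ref{thm:1-2.1}, and the transfer of $C^2$ regularity between $\mathds{R}$ and its quotient $\mathbb{S}^1$ is standard. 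Thus no substantive difficulty arises, and the entire mathematical content of the result is carried by Theorem \ref{thm:1-2.1}; Theorem \ref{thm:1-2.2} is its specialization to the natural period $2\pi$ of the unit circle.
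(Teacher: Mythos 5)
Your proposal is correct and coincides with the paper's own treatment: the paper disposes of Theorem \ref{thm:1-2.2} in a single line, stating that it follows directly from Theorem \ref{thm:1-2.1}, which is precisely your specialization $L=2\pi$ (so that $T/L$ rational with $2T/L\not\in\mathds{N}$ becomes $T/(2\pi)$ rational with $T/\pi\not\in\mathds{N}$). Your additional care in spelling out the lift of data from $\mathbb{S}^1$ to $2\pi$-periodic functions on $\mathds{R}$ and the descent of the periodic solution back to the cylinder merely makes explicit what the paper leaves implicit, and is entirely sound.
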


\begin{Remark} \label{rem:1-2.3}
Theorem \ref{thm:1-2.2} implies that the wave equation defined on a
circle possesses the exact controllability.
\end{Remark}

\begin{Remark} \label{rem:1-2.4}
For the $(1+n)$-dimensional wave equation \eqref{eq:1-1.1}, we have
similar results, provided that the initial/terminal data $f(x)$ and
$g(x)$ in \eqref{eq:1-1.2}-\eqref{eq:1-1.3} are $C^{[n/2]+3}$ smooth
functions and are periodic in $r=\sqrt{x_1^2+\cdots +x_n^2}$. In
fact, in the present situation, using Theorem \ref{thm:1-2.1}, by a
way similar to the proof of Theorem \ref{thm:1-1.1} we can prove
that the TBVP \eqref{eq:1-1.1}-\eqref{eq:1-1.3} admits a global
$C^2$ solution $y = y(t,x)$ defined on the strip $[0,T]\times
\mathds{R}^n$, provided that $\displaystyle\frac{T}{L}$ is a
rational number with $\displaystyle\frac{2T}{L}\not\in \mathds{N}$,
where $L$ is the period of $f(x),\;g(x)$ with respect to $r$. In
other words, in this case the equation \eqref{eq:1-1.1} still
possesses the exact controllability.
\end{Remark}

Based on the results mentioned above, we further study the two-point
boundary value problems for the wave equation defined on a strip
with Dirichlet or Neumann boundary conditions and show that the
equation still possesses the exact controllability in these cases.
Finally, as an application of the results mentioned above, we
introduce the hyperbolic curvature flow and obtain a result
analogous to the well-known theorem of Gage and Hamilton \cite{gh}
for the curvature flow of plane curves.

This paper is organized as follows. Section \ref{sec:2} is devoted
to the study on the global exact controllability of one-dimensional
wave equation. Based on Section \ref{sec:2}, in Section \ref{sec:3}
we investigate the global exact controllability of linear wave
equations in several space variables and we prove Theorem
\ref{thm:1-1.1}. In Section \ref{sec:4}, we give the proof of
Theorem \ref{thm:1-2.1} and Theorem \ref{thm:1-2.2}, respectively.
As some applications of Theorem \ref{thm:1-2.1}, in Sections
\ref{sec:5} and \ref{sec:6} we study the two-point boundary value
problems for the wave equation defined on a strip with (homogeneous
or inhomogeneous) Dirichlet or Neumann boundary conditions and show
that the equation still possesses the exact controllability in these
cases. The global exact controllability of some nonlinear wave
equations arising from geometry and physics has been investigated in
Section \ref{sec:7}. In Section \ref{sec:8} we introduce the
hyperbolic curvature flow and prove a result analogous to the one
shown by Gage and Hamilton in \cite{gh} for curvature flow of plane
curves. In Section \ref{sec:9} we give a summary and some
discussions and then present several open problems.

\section{One-dimensional wave equation}\label{sec:2}

This section concerns the global exact controllability of
one-dimensional wave equation, which is a basis of the present
paper.

Consider the following TBVP for one-dimensional wave equation
\begin{equation} \label{eq:2.1}
\begin{cases}
y_{tt} - y_{xx} = 0,\\
y(0,x) = f(x),\\
y(T,x) = g(x),
\end{cases}
\end{equation}
where $T$ is an arbitrary fixed positive constant, $f(x)$ and $g(x)$
are two given functions of $x\in\mathds{R}$. We have the following
theorem which is a special case of Theorem \ref{thm:1-1.1} but
fundamental in this paper.
\begin{Theorem} \label{thm:2.1}
Suppose that $f(x)$ and $g(x)$ are two given $C^2$-smooth functions
of $x\in\mathds{R}$. Then the TBVP \eqref{eq:2.1} admits a
$C^2$-smooth solution $y = y(t,x)$ defined on the strip
$[0,T]\times\mathds{R}$.
\end{Theorem}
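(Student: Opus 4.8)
The plan is to use the general $C^2$ representation of solutions of the one-dimensional wave equation together with an elimination that reduces the terminal condition to a single scalar functional equation, which I then solve by prescribing data on a fundamental interval and extending by a recurrence.

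First, recall that every $C^2$ solution of $y_{tt}-y_{xx}=0$ has the d'Alembert form $y(t,x)=\phi(x+t)+\psi(x-t)$ with $\phi,\psi\in C^2(\mathds{R})$, and conversely any such pair produces a $C^2$ solution. Imposing the two boundary conditions gives the coupled system $\phi(x)+\psi(x)=f(x)$ and $\phi(x+T)+\psi(x-T)=g(x)$. From the first I set $\psi=f-\phi$, which is automatically $C^2$; substituting into the second and writing $w=x-T$ reduces everything to the single difference equation $\phi(w+2T)-\phi(w)=P(w)$, where $P(w):=g(w+T)-f(w)\in C^2(\mathds{R})$. Thus the theorem is equivalent to producing a $C^2$ function $\phi$ satisfying this equation.

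Next I construct $\phi$. On the fundamental interval $[0,2T]$ I choose $\phi$ to be any $C^2$ function whose endpoint data satisfy the three matching relations $\phi(2T)-\phi(0)=P(0)$, $\phi'(2T)-\phi'(0)=P'(0)$, and $\phi''(2T)-\phi''(0)=P''(0)$; such functions exist in abundance (for instance by Hermite-type interpolation of arbitrarily prescribed jet data at $0$ together with the forced data at $2T$). I then extend $\phi$ to all of $\mathds{R}$ by the recurrence $\phi(w+2T)=\phi(w)+P(w)$ forward and $\phi(w-2T)=\phi(w)-P(w-2T)$ backward. Finally I set $\psi=f-\phi$ and output $y(t,x)=\phi(x+t)+\psi(x-t)$, which by the elimination above meets both $y(0,\cdot)=f$ and $y(T,\cdot)=g$.

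I expect the only real obstacle to be verifying the global $C^2$-regularity of the extended $\phi$ at the seams $w\in 2T\,\mathds{Z}$. The point is that differentiating the recurrence gives $\phi^{(k)}(w+2T)=\phi^{(k)}(w)+P^{(k)}(w)$ for $k=0,1,2$, so the one-sided limits of $\phi,\phi',\phi''$ across each seam agree precisely when the three matching relations at the first seam hold---which is exactly how the data on $[0,2T]$ were chosen; by the translation structure of the recurrence every other seam reduces to this one. Since the difference equation then holds for all $w$, the resulting $y$ is a genuine $C^2$ solution on $[0,T]\times\mathds{R}$. The large freedom in the choice of $\phi$ on $[0,2T]$ moreover exhibits the non-uniqueness anticipated in Remark~\ref{rem:1-1.1}.
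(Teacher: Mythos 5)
Your proposal is correct, but it solves the problem through a different reduction than the paper. You eliminate at the level of the two traveling-wave profiles: writing $y=\phi(x+t)+\psi(x-t)$, using the initial condition to set $\psi=f-\phi$, and converting the terminal condition into the pure difference equation $\phi(w+2T)-\phi(w)=P(w)$ with $P(w)=g(w+T)-f(w)$, which you then solve by choosing a seed on the fundamental interval $[0,2T]$ with three jet-matching conditions and propagating by the recurrence. The paper instead first homogenizes the initial data via \eqref{eq:2.2}, then looks for the solution through its initial velocity $v$ in d'Alembert's integral formula, so that the terminal condition becomes the Volterra integral equation of the first kind $\int_{x-T}^{x+T}v(\tau)\,d\tau=2\tilde f(x)$ (cf.\ Remark \ref{rem:2.3}); it solves this by the explicit piecewise formula \eqref{eq:2.9}, built from a seed $u$ on $[-T,T]$ subject to the matching conditions \eqref{eq:2.8} plus correction sums of shifted $\tilde f'$ terms. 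The two constructions are essentially an antiderivative apart: your recurrence for $\phi$, once differentiated, is exactly the paper's solved-out formula for $v$, and your three jet conditions on $\phi$ correspond to the integral, value, and derivative conditions in \eqref{eq:2.8}. What your route buys is transparency---no homogenization step, no floor-function index $N$, and the seam-regularity check reduces to three one-line identities; it also makes the non-uniqueness (the free seed) completely obvious. What the paper's route buys is that it exhibits the control input itself, namely the initial velocity $v(x)$, in closed form; this is the natural object for the controllability interpretation and is what gets reused later (e.g., the sign condition on $v$ in Lemma \ref{lem:7.1} is stated directly in terms of the formula \eqref{eq:2.9}). Note that your construction also yields the control if desired, via $y_t(0,x)=2\phi'(x)-f'(x)$.
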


\begin{proof}
Introduce
\begin{equation} \label{eq:2.2}
\tilde{y}(t,x) = y(t,x) - \displaystyle\frac{f(x-t)+f(x+t)}{2},
\end{equation}
and
\begin{equation} \label{eq:2.3}
\tilde{f}(x) = g(x) - \displaystyle\frac{f(x-T)+f(x+T)}{2}.
\end{equation}
Obviously, $\tilde{f}(x)$ is a $C^2$-smooth function of
$x\in\mathds{R}$. By means of $\tilde{y}(t,x)$ and $\tilde{f}(x)$,
the TBVP \eqref{eq:2.1} can be equivalently rewritten as
\begin{equation} \label{eq:2.4}
\begin{cases}
\tilde{y}_{tt} - \tilde{y}_{xx} = 0,\\
\tilde{y}(0,x) = 0,\\
\tilde{y}(T,x) = \tilde{f}(x),
\end{cases}
\end{equation}
Therefore, in order to prove Theorem \ref{thm:2.1}, it suffices to
show that the TBVP \eqref{eq:2.4} has a $C^2$-smooth solution
$\tilde{y} = \tilde{y}(t,x)$ defined on the strip
$[0,T]\times\mathds{R}$.

By d'Alembert formula, the solution of the following Cauchy problem
\begin{equation} \label{eq:2.5}
\begin{cases}
\tilde{y}_{tt} - \tilde{y}_{xx} = 0,\\
t=0:\;\;\tilde{y} = 0,\quad \tilde{y}_t = v(x)
\end{cases}
\end{equation}
reads
\begin{equation} \label{eq:2.6}
\tilde{y}(t,x) =
\displaystyle\frac{1}{2}\int_{x-t}^{x+t}{v(\tau)}d\tau,
\end{equation}
where $v(x)$ is a $C^1$-smooth function to be determined, which
stands for the initial velocity.

We next show that there indeed exists an initial velocity $v(x)$
such that the solution $\tilde{y} = \tilde{y}(t,x)$ of the Cauchy
problem \eqref{eq:2.5}, defined by \eqref{eq:2.6}, satisfies the
terminal condition in the TBVP \eqref{eq:2.4}, i.e.,
\begin{equation} \label{eq:2.7}
\tilde{y}(T,x) = \tilde{f}(x).\end{equation}

To do so, we construct a $C^1$ function $u(x)$ on $[-T, T]$ which
satisfies
\begin{equation} \label{eq:2.8}
\int_{-T}^T u(\tau)d\tau = 2\tilde{f}(0), \quad  u(T) - u(-T) =
2\tilde{f}'(0) \quad \text{and}\quad u'_-(T) - u'_+(-T) =
2\tilde{f}''(0).
\end{equation}
Define
\begin{equation} \label{eq:2.9}
v(x) =
\begin{cases}
u(x-2NT) + 2\displaystyle\sum_{i=1}^N\tilde{f}'\big(x-(2i-1)T\big), \quad\forall\; x\geq 0,\\
u(x+2NT) - 2\displaystyle\sum_{i=1}^N\tilde{f}'\big(x+(2i-1)T\big),
\quad\forall\; x<0,
\end{cases}
\end{equation}
where $N$ is given by
$$N=\left[\displaystyle\frac{|x|+T}{2T}\right].$$
In \eqref{eq:2.9}, the terms including the summation disappear in
the case of $N=0$.

We claim that the function $v(x)$ defined by \eqref{eq:2.9} is
$C^1$-smooth.

In what follows, we distinguish two cases to show this fact.

{\bf Case A: $x\geq 0$.} $\quad$ In this case, for every $x\neq
(2N-1)T\;\; (N\in\mathds{N})$, by \eqref{eq:2.9} it is easy to check
that $v(x)$ is $C^1$-smooth at such a point $x$.

When $x=(2N-1)T$, it follows from \eqref{eq:2.9} that
\begin{equation} \label{eq:2.10}
\lim_{\alpha\rightarrow x^+}v(\alpha) = v(x) = u(-T) +
2\sum_{i=0}^{N-1}\tilde{f}'(2iT)
\end{equation}
and
\begin{equation} \label{eq:2.11}
\lim_{\alpha\rightarrow x^-}v(\alpha)  = \lim_{\alpha\rightarrow
x^-}u\big(\alpha-2(N-1)T\big) +
2\sum_{i=1}^{N-1}\lim_{\alpha\rightarrow
x^-}\tilde{f}'\big(\alpha-(2i-1)T\big) = u(T) +
2\sum_{i=1}^{N-1}\tilde{f}'(2iT).
\end{equation}
Combining \eqref{eq:2.10} and \eqref{eq:2.11} gives
\begin{equation} \label{eq:2.12}
v(x)- \lim_{\alpha\rightarrow x^-}v(\alpha)= u(-T) - u(T) +
2\tilde{f}'(0).
\end{equation}
Noting the second equation in \eqref{eq:2.8} yields the
continuity of $v(x)$ for all $x\geq 0$.\\

On the other hand, it follows from \eqref{eq:2.9} that
\begin{equation} \label{eq:2.13}
v'_-(x) = u'_-\big(x-2(N-1)T\big) +
2\sum_{i=1}^{N-1}\tilde{f}''\big(x-(2i-1)T\big)= u'_-(T) +
2\sum_{i=1}^{N-1}\tilde{f}''\big((2N-2i)T\big)
\end{equation}
and
\begin{equation} \label{eq:2.14}
v'_+(x) = u'_+(x-2NT) + 2\sum_{i=1}^N\tilde{f}''\big(x-(2i-1)T\big)=
u'_+(-T) + 2\sum_{i=1}^N\tilde{f}''\big((2N-2i)T\big).
\end{equation}
Combining \eqref{eq:2.13} and \eqref{eq:2.14} gives
\begin{equation} \label{eq:2.15}
v'_+(x) - v'_-(x) = u'_+(-T) - u'_-(T) + 2\tilde{f}''(0).
\end{equation}
Noting the third equation in \eqref{eq:2.8}, we have
\begin{equation} \label{eq:2.16}
v'_+(x) = v'_-(x).
\end{equation}
Summarizing the above argument yields that the function $v(x)$
defined by \eqref{eq:2.9} is $C^1$-smooth for all $x\geq 0$.

{\bf Case B: $x< 0$.} $\quad$ Similarly, we can prove that the
function $v(x)$ is $C^1$-smooth in the present situation.

Combining Cases A and B, we have shown that the function $v(x)$
defined by \eqref{eq:2.9} is a $C^1$-smooth function of
$x\in\mathds{R}$, and then the solution $\tilde{y}=\tilde{y}(t,x)$,
defined by \eqref{eq:2.6}, of the Cauchy problem \eqref{eq:2.5} is
$C^2$-smooth on the whole upper plane
$\mathds{R}^+\times\mathds{R}$.

We now claim that, for the initial velocity $v(x)$ defined by
\eqref{eq:2.9}, the solution $\tilde{y}=\tilde{y}(t,x)$ defined by
\eqref{eq:2.6} satisfies the terminal condition \eqref{eq:2.7}.

In fact, we verify this statement by distinguishing the following
two cases:

{\bf Case 1: $x\geq 0$.} $\quad$ In the present situation, it holds
that
$$x\leq 2NT+T \leq x+2T.$$
Thus it follows from \eqref{eq:2.9} that
\begin{align} \label{eq:2.17}
\displaystyle\frac{1}{2}\int_{x}^{x+2T}{v(\tau)}d\tau
&=\; \frac{1}{2}\left[\int_{x}^{2NT+T}{v(\tau)}d\tau + \int_{2NT+T}^{x+2T}{v(\tau)}d\tau\right] \nonumber \\
&=\; \int_{x}^{2NT+T}\left[\frac{1}{2}u(\tau-2NT) +
\sum_{i=1}^N\tilde{f}'\big(\tau-(2i-1)T\big)\right]d\tau +\nonumber \\
&\quad\; \int_{2NT+T}^{x+2T}\left[\frac{1}{2}u\big(\tau-2(N+1)T\big)
+
\sum_{i=1}^{N+1}\tilde{f}'\big(\tau-(2i-1)T\big)\right]d\tau \nonumber \\
&=\;\frac{1}{2}\left[\int_{x}^{2NT+T}u(\tau)d\tau +
\int_{2NT+T}^{x+2T}u(\tau)d\tau \right]+ \nonumber \\
&\;\quad \sum_{i=1}^N\left[\tilde{f}\big((2N-2i+2)T\big) -
\tilde{f}\big(x-(2i-1)T\big)\right] +\nonumber \\
&\;\quad \sum_{i=1}^{N+1}\left[\tilde{f}\big(x-(2i-3)T\big) -
\tilde{f}\big((2N-2i+2)T\big)\right] \nonumber \\
&=\;\frac{1}{2}\left[\int_{x-2NT}^{T}u(\tau)d\tau +
\int_{-T}^{x-2NT}u(\tau)d\tau \right] +\nonumber \\
&\;\quad \sum_{i=1}^N\left[\tilde{f}\big(x-(2i-3)T\big) -
\tilde{f}\big(x-(2i-1)T\big)\right] +\nonumber \\
&\;\quad  \tilde{f}\big(x-(2N-1)T\big)- \tilde{f}(0) \nonumber \\
&= \;\frac{1}{2}\int_{-T}^T{u(\tau)}d\tau + \tilde{f}(x+T) -
\tilde{f}(0).
\end{align}
Noting the first condition of \eqref{eq:2.8}, we obtain
\begin{equation} \label{eq:2.18}
\displaystyle\frac{1}{2}\int_{x}^{x+2T}{v(\tau)}d\tau =
\tilde{f}(x+T).
\end{equation}
This leads to
\begin{equation} \label{eq:2.19}
\tilde{y}(T,x) =
\displaystyle\frac{1}{2}\int_{x-T}^{x+T}{v(\tau)}d\tau =
\tilde{f}(x).
\end{equation}
This is the desired terminal condition \eqref{eq:2.7} for the case
of $x\geq 0$.

{\bf Case 2: $x< 0$.} $\quad$ In a similar manner, we can prove the
terminal condition \eqref{eq:2.7} holds for all $x<0$.

The above discussion shows that $\tilde{y}=\tilde{y}(t,x)$ defined
by \eqref{eq:2.6} is a $C^2$-smooth solution of the TBVP
\eqref{eq:2.4} defined on the strip $[0,T]\times\mathds{R}$. This
proves Theorem \ref{thm:2.1}.
\end{proof}

\begin{Remark} \label{rem:2.1}
In order to illustrate that it is easy to construct the function $u$
satisfying \eqref{eq:2.8}, in this remark we present two examples.
The first example reads
\begin{equation} \label{eq:2.20}
u=\frac{\tilde{f}''(0)}{2T}x^2+\frac{\tilde{f}'(0)}{T}x+\frac{\tilde{f}(0)}{T}
-\frac{\tilde{f}''(0)T}{6}.
\end{equation}
It is easy to check that the function $u$ defined by \eqref{eq:2.20}
satisfies all conditions in \eqref{eq:2.8}. The second example is
\begin{equation} \label{eq:2.21}
u(x) =
\begin{cases}
\tilde{h}+\displaystyle\frac{h}{2}+\frac{h}{2}\sin{\left(\frac{\pi}{T}x+\frac{\pi}{2}\right)},
\quad\forall\; x\in [-T, 0),\\
\tilde{h}+h+\displaystyle\frac{1}{T}\tilde{f}''(0)x^2,
\quad\forall\; x\in [0, T],
\end{cases}
\end{equation}
where $$ h = 2\tilde{f}'(0) - T \tilde{f}''(0) \quad \text{and}\quad
\tilde{h} = \displaystyle\frac{\tilde{f}(0)}{T}
+\left(\frac{7}{12}T\tilde{f}''(0) - \frac{3}{2}
\tilde{f}'(0)\right).$$ It is easy to verify that the function $u$
defined by \eqref{eq:2.21} also satisfies all conditions in
\eqref{eq:2.8}.
\end{Remark}

\begin{Remark} \label{rem:2.2}
The solution of the TBVP \eqref{eq:2.4} (equivalently,
\eqref{eq:2.2}) is not unique. In fact, by the definition of $u(x)$
we observe that such a function $u$ is not unique. This results the
non-uniqueness of the initial velocity $v(x)$ and then the solution
$\tilde{y}=\tilde{y}(t,x)$. For example, choose a $C^1$-smooth
function $v_1(x)$ satisfying
\begin{equation} \label{eq:2.22}
\int_{x-T}^{x+T}{v_1(\tau)}d\tau = 0.
\end{equation}
This implies that $v_1(x)$ is a $2T$-periodic function and satisfies
\begin{equation} \label{eq:2.23}
\int_{-T}^{T}{v_1(\tau)}d\tau = 0.
\end{equation}
Obviously, the function
\begin{equation} \label{eq:2.24}
\tilde{y}(t,x) =
\displaystyle\frac{1}{2}\int_{x-t}^{x+t}\big(v(\tau)+v_1(\tau)\big)d\tau
\end{equation}
also gives a $C^2$-smooth solution of the TBVP \eqref{eq:2.4}.
\end{Remark}

The idea to construct $v(x)$ (see \eqref{eq:2.9}) essentially comes
from the {\it characteristic-quadrilateral identity} given in
\cite{kong3}. In fact, using the characteristic-quadrilateral
identity, we have
\begin{equation} \label{eq:2.25}
\tilde{y}(A) + \tilde{y}(D) = \tilde{y}(B_1) + \tilde{y}(C_1).
\end{equation}
See Figure 1. By the the initial condition in the TBVP
\eqref{eq:2.4}, we get
\begin{equation}\label{eq:2.26}
\tilde{y}(A) = \tilde{y}(B_1) + \tilde{y}(C_1).
\end{equation}
Similarly, we obtain
\begin{equation}\label{eq:2.27}
\tilde{y}(B_1) =  \tilde{y}(B_2) + \tilde{y}(C_2),\quad \cdots,\quad
\tilde{y}(B_{N-1}) =  \tilde{y}(B_N) + \tilde{y}(C_N).
\end{equation}
On the other hand, it follows from \eqref{eq:2.6} that
\begin{equation}\label{eq:2.28}
\tilde{y}(A)  =
\displaystyle\frac{1}{2}\int_{-T}^{x}{v(\tau)}d\tau,\quad
\tilde{y}(B_N) =
\displaystyle\frac{1}{2}\int_{-T}^{x-2NT}{v(\tau)}d\tau.
\end{equation}
Combining \eqref{eq:2.26}-\eqref{eq:2.28} gives the definition of
$v(x)$ shown by \eqref{eq:2.9}.

\begin{center}
\begin{figure}[htb]
\begin{minipage}[t]{1\linewidth}
\vspace{0pt}
 \centering
\centerline{
 \psfig{figure=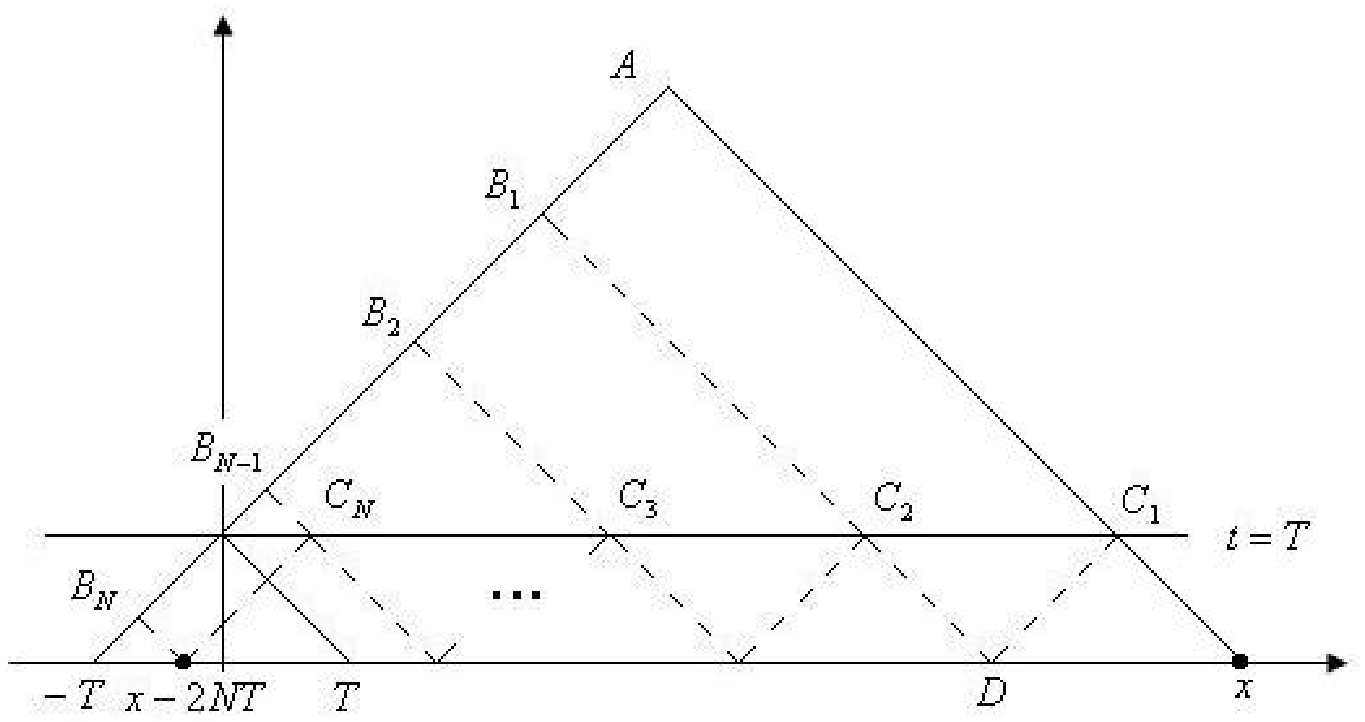,height=7cm}}
 \caption{The method to construct the initial velocity $v(x)$ by
characteristic-quadrilaterals}
\end{minipage}
\end{figure}
\end{center}

\begin{Remark} \label{rem:2.3}
In fact, the existence of the $C^2$-solution of the TBVP
\eqref{eq:2.4} is equivalent to the existence of the $C^1$-solution
of the following integral equation
\begin{equation}\label{eq:2.29}
\int_{x-T}^{x+T}{v(\tau)}d\tau = \tilde{f}(x).
\end{equation}
\eqref{eq:2.29} is a typical example of {\it Volterra integral
equations of the first kind}. By the theory of Volterra integral
equations, we can obtain the existence of the $C^1$-solution of the
integral equation \eqref{eq:2.29}. However, by this theory we do not
know how to construct the desired solution. But, in our theory we
present a direct method to construct the desired solution of the
TBVP \eqref{eq:2.1}.
\end{Remark}

\section{Wave equations in several space
variables} \label{sec:3}

In this section, we study the global exact controllability of wave
equations in several space variables and prove Theorem
\ref{thm:1-1.1}.

Consider the TBVP \eqref{eq:1-1.1}-\eqref{eq:1-1.3}, i.e., the
following TBVP
\begin{equation} \label{eq:3.1}
\begin{cases}
\square\, y(t,x) = 0,\\
y(0,x) = f(x),\\
y(T,x) = g(x),
\end{cases}
\end{equation}
where $(t,x)\in\mathds{R}^+\times\mathds{R}^n$, the symbol $\square$
stands for the d'Alembert's operator, i.e.,
$$\square =\partial_t^2-\sum_{i=1}^n\partial_i^2= \partial_t^2-\Delta_x,$$
and $f(x),g(x)\in C^{[n/2]+2}(\mathds{R}^{n})$ are two given
functions defined on $\mathds{R}^n$. In \eqref{eq:3.1}, without loss
of generality, we assume that the propagation speed $c$ of wave is
1. Thus, in order to prove Theorem \ref{thm:1-1.1}, it suffice to
show the following theorem.
\begin{Theorem}\label{thm:3.1}
The TBVP \eqref{eq:3.1} has a $C^2$-smooth solution $y = y(t,x)$
defined on the strip $[0,T]\times\mathds{R}^n$.
\end{Theorem}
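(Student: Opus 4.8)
The plan is to mimic the one-dimensional argument of Theorem \ref{thm:2.1}, reducing \eqref{eq:3.1} first to the case of vanishing initial position and then to a family of one-dimensional two-point boundary value problems. First I would let $W=W(t,x)$ denote the (even-in-$t$) solution of $\square W=0$ with $W(0,x)=f(x)$, $W_t(0,x)=0$; this is the exact $n$-dimensional analogue of the term $\tfrac12\big(f(x-t)+f(x+t)\big)$ subtracted in \eqref{eq:2.2}, and it is $C^2$ on $[0,T]\times\mathds{R}^n$ precisely because $f\in C^{[n/2]+2}(\mathds{R}^n)$. Setting $\tilde y=y-W$ and $\tilde f(x)=g(x)-W(T,x)\in C^{[n/2]+2}(\mathds{R}^n)$, the problem \eqref{eq:3.1} becomes: find an initial velocity $v$ so that the solution $\tilde y$ of $\square\tilde y=0$ with $\tilde y(0,\cdot)=0$, $\tilde y_t(0,\cdot)=v$ satisfies $\tilde y(T,\cdot)=\tilde f$. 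Equivalently, writing $S(T)$ for the sine propagator $v\mapsto\tilde y(T,\cdot)$, one must produce a right inverse of $S(T)$ applied to $\tilde f$.

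To construct such a $v$ I would pass to the plane-wave (Radon) representation, which turns the $n$-dimensional d'Alembertian into a one-dimensional one: for each direction $\omega\in\mathbb{S}^{n-1}$ the hyperplane average $(\mathcal{R}h)(\omega,p)=\int_{x\cdot\omega=p}h\,d\sigma$ intertwines $\Delta_x$ with $\partial_p^2$, so if $\tilde y$ solves the wave equation then $u(t,\omega,p):=(\mathcal{R}\tilde y(t,\cdot))(\omega,p)$ solves $u_{tt}-u_{pp}=0$ for every fixed $\omega$. Under $\mathcal{R}$ the conditions $\tilde y(0,\cdot)=0$ and $\tilde y(T,\cdot)=\tilde f$ become, for each $\omega$, the one-dimensional two-point data $u(0,\omega,p)=0$ and $u(T,\omega,p)=(\mathcal{R}\tilde f)(\omega,p)$. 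This is exactly a problem of the form \eqref{eq:2.4} in the single space variable $p$, and Theorem \ref{thm:2.1} guarantees a solution; its initial velocity $u_t(0,\omega,\cdot)$ is the candidate for $(\mathcal{R}v)(\omega,\cdot)$. Carrying out the construction \eqref{eq:2.9} symmetrically in $(\omega,p)$ ensures the parity relation $u_t(0,-\omega,-p)=u_t(0,\omega,p)$ inherited from $\mathcal{R}\tilde f$, after which $v$ is recovered by inverting the plane-wave transform and $\tilde y$ by superposing the one-dimensional evolutions over $\omega\in\mathbb{S}^{n-1}$.

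The hard part will be the step in which the per-direction data are assembled into a single function $v$ on $\mathds{R}^n$ of the right regularity. Two points deserve care. First, the naive frequency-space inversion $\widehat v(\xi)=|\xi|\,\widehat{\tilde f}(\xi)/\sin(|\xi|T)$ is obstructed at the resonant frequencies $|\xi|\in(\pi/T)\mathds{N}$; this is the genuine analytic difficulty, and it is circumvented exactly as in one dimension, where the explicit $v$ of \eqref{eq:2.9} is allowed to grow at infinity rather than being selected by spectral division — the non-uniqueness recorded in Remark \ref{rem:2.2} provides precisely the freedom needed to meet the consistency (range) conditions of the plane-wave transform. Second, the plane-wave formula for $\tilde y$ differs from $\tilde f$ by roughly $[n/2]$ radial derivatives, so a $C^2$ solution requires $[n/2]+2$ derivatives on the data; this is the reason for the hypothesis $f,g\in C^{[n/2]+2}(\mathds{R}^n)$, and verifying this derivative count, together with the convergence of the superposition over $\mathbb{S}^{n-1}$ to a classical solution, is where the estimates concentrate. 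Once $v$ is in hand, $y=W+\tilde y$ is the desired $C^2$ solution on $[0,T]\times\mathds{R}^n$, completing the proof of Theorem \ref{thm:3.1} and hence of Theorem \ref{thm:1-1.1}.
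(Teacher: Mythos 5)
Your overall strategy---subtract the free evolution of $f$, then reduce the $n$-dimensional problem to one-dimensional TBVPs handled by Theorem \ref{thm:2.1}---is in the right spirit, and your first step (subtracting $W$) is a faithful analogue of \eqref{eq:2.2}--\eqref{eq:2.3}. But the reduction you chose, the plane-wave (Radon) representation, fails at the outset for the data allowed in Theorem \ref{thm:3.1}: the hypothesis is only $f,g\in C^{[n/2]+2}(\mathds{R}^n)$, with no decay at infinity, and the hyperplane integral $(\mathcal{R}\tilde f)(\omega,p)=\int_{x\cdot\omega=p}\tilde f\,d\sigma$ is an integral over a non-compact set, hence generally divergent (take $g\equiv 1$, $f\equiv 0$, so $\tilde f\equiv 1$). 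The same objection applies to the frequency-space formula $\widehat v(\xi)=|\xi|\,\widehat{\tilde f}(\xi)/\sin(|\xi|T)$, since $\tilde f$ need not have a Fourier transform in any pointwise sense. This is precisely why the paper reduces to one dimension through \emph{spherical} means rather than plane waves: the average $\mathcal{A}_r h(x)$ in \eqref{eq:3.2} is taken over a compact sphere, so it is defined for arbitrary continuous data, and for odd $n=2k+1$ the quantity $w(t,r)=\bigl(\tfrac{1}{r}\partial_r\bigr)^{k-1}\bigl(r^{2k-1}\mathcal{A}_r y\bigr)$ satisfies the one-dimensional wave equation in $(t,r)$ for each fixed center $x$, so Theorem \ref{thm:2.1} applies directly and $y$ is recovered pointwise from the limit \eqref{eq:3.6}; even $n$ is then handled by Hadamard descent. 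No inversion of an integral transform is ever needed.

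Second, even if you restricted to rapidly decaying data so that $\mathcal{R}$ makes sense, the step you yourself single out as hard is a genuine gap, not a technicality. A function $\psi(\omega,p)$ equals $\mathcal{R}v$ for some $v$ only if it satisfies Helgason's range conditions: besides the parity $\psi(-\omega,-p)=\psi(\omega,p)$, every moment $\int_{\mathds{R}}p^k\psi(\omega,p)\,dp$ must extend from $\omega\in\mathbb{S}^{n-1}$ to a homogeneous polynomial of degree $k$ in $\omega$, for all integers $k\geq 0$. Your per-direction velocities come from the construction \eqref{eq:2.9}, which for each fixed $\omega$ involves an arbitrary choice of a function $u$ subject only to the three scalar conditions \eqref{eq:2.8}; nothing in that construction couples different directions, and you give no mechanism for choosing these functions so that the infinitely many moment constraints---which do couple all directions---hold simultaneously. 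Appealing to the non-uniqueness of Remark \ref{rem:2.2} names the available freedom but does not show it suffices; as written, the assembled $v$ need not exist. So the proposal is not a proof. The natural repair is to abandon the plane-wave reduction in favor of the paper's spherical-means reduction, which sidesteps both the divergence problem and the range problem because the solution is read off from the $r\to 0$ limit of the one-dimensional solutions rather than by inverting a transform.
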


\begin{Remark}\label{rem:3.1}
Theorem \ref{thm:3.1} implies the global exact controllability of
wave equations in several space variables. Similar result is true
for the following inhomogeneous wave equations $$ \square\, y(t,x) =
F(t,x).$$\end{Remark}

We next give the proof of Theorem \ref{thm:3.1} (equivalently,
Theorem \ref{thm:1-1.1}).

\begin{proof} We distinguish two cases to prove Theorem \ref{thm:3.1}.

{\bf Case A: $n=2k+1\; (k\in\mathds{N})$}

Define the spherical mean of any given function $h(x)$ defined on
$\mathds{R}^n$:
\begin{equation} \label{eq:3.2}
\mathcal{A}_r h(x) =
\displaystyle\frac{1}{\omega_{n-1}}\int_{S^{n-1}}{h(x+ry)}d\sigma(y),
\end{equation}
where $\omega_{n-1}$ denotes the area of the unit sphere
$S^{n-1}\subset\mathds{R}^n$ and $d\sigma(y)$ stands for the
Lebesgue measure on the unit sphere $S^{n-1}$. Let
\begin{equation} \label{eq:3.3}
w(t,r) = \left(\displaystyle\frac{1}{r}\frac{\partial}{\partial
r}\right)^{k-1}\left(r^{2k-1}\mathcal{A}_r y(t,x)\right).
\end{equation}
It is easy to check that the function $w(t,r)$ satisfies the
following TBVP
\begin{equation} \label{eq:3.4}
\begin{cases}
w_{tt}- w_{rr} = 0,\\
w(0,r) = \left(\displaystyle\frac{1}{r}\frac{\partial}{\partial
r}\right)^{k-1}\left(r^{2k-1}\mathcal{A}_r f(x)\right),\\
w(T,r) = \left(\displaystyle\frac{1}{r}\frac{\partial}{\partial
r}\right)^{k-1}\left(r^{2k-1}\mathcal{A}_r g(x)\right).
\end{cases}
\end{equation}
Thus, by Theorem \ref{thm:2.1}, the TBVP \eqref{eq:3.4} has a
$C^2$-smooth solution defined on the domain $[0,T]\times\mathds{R}$.

Denote
\begin{equation} \label{eq:3.5}
c_0 = \prod_{i=1}^k{(2i-1)}.
\end{equation}
Then we have
\begin{equation} \label{eq:3.6}
y(t,x) = \lim_{r\rightarrow 0}\mathcal{A}_r y(t,x) =
\lim_{r\rightarrow 0} \displaystyle\frac{1}{c_0 r}w(t,r).
\end{equation}
It is easy to verify that the function $y=y(t,x)$ defined by
\eqref{eq:3.6} is a $C^2$-smooth solution of the TBVP \eqref{eq:3.1}
on the strip $[0,T]\times\mathds{R}^n$.

{\bf Case B: $n=2k\; (k\in\mathds{N})$}

By Hadamard's method of descent, we can also prove that the TBVP
\eqref{eq:3.1} has a $C^2$-smooth solution on the strip
$[0,T]\times\mathds{R}^n$. The main idea here is that if $y$ solves
a wave equation with $n$ space variables, then it is also a solution
of the corresponding wave equation with $n+1$ space variables, which
happens to be independent of the last variable $x_{n+1}$. Here we
omit the details.  Thus the proof of Theorem \ref{thm:3.1} is
completed.
\end{proof}

\begin{Remark} \label{rem:3.2}
Noting Remark \ref{rem:2.2}, we know that the solution of the TBVP
\eqref{eq:3.1} does not possesses the uniqueness.\end{Remark}

\section{Wave equation defined on a closed curve}\label{sec:4}

This section concerns the exact controllability of the wave equation
defined on a circle. In other words, in this section we prove
Theorem \ref{thm:1-2.1} and then Theorem \ref{thm:1-2.2}.

To do so, we need the following Lemma (see Kong \cite{kong}).

\begin{Lemma} \label{lem:4.1}
Suppose that $F(x)$ is a $L$-periodic $C^2$ function of
$x\in\mathds{R}$, and its derivative of third order, i.e.,
$F'''(x)$, is piecewise smooth. Suppose furthermore that the Fourier
series of $F(x)$ is given by
\begin{equation} \label{eq:4.1}
F(x) = \displaystyle\frac{1}{2}A_0 + \sum_{k=1}^\infty{A_k
\cos{\left(\frac{2k\pi}{L}x\right)}} + \sum_{k=1}^\infty{B_k
\sin{\left(\frac{2k\pi}{L}x\right)}},
\end{equation}
where
$$A_0=\displaystyle\frac{2}{L}\int_0^L{f(x)}dx,\quad
A_k=\displaystyle\frac{2}{L}\int_0^L{f(x)\cos{\left(\frac{2k\pi}{L}x\right)}}dx,\quad
B_k=\frac{2}{L}\int_0^L{f(x)\sin{\left(\frac{2k\pi}{L}x\right)}}dx.$$
Then the series $\displaystyle\sum_{k=1}^\infty{|k^2 A_k|}$ and
$\displaystyle\sum_{k=1}^\infty{|k^2 B_k|}$ are convergent.
\end{Lemma}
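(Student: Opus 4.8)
The plan is to exploit the classical principle that the Fourier coefficients of a periodic function decay faster the smoother the function is, made quantitative by repeated integration by parts. Write $\omega_k=\frac{2k\pi}{L}$. Starting from the given integral formulas for $A_k$ and $B_k$, I would integrate by parts three times. Because $F$ is $C^2$ and $L$-periodic, the functions $F,\,F',\,F''$ are all continuous and $L$-periodic, so every boundary term produced in the first three integrations by parts cancels; the third integration by parts is legitimate since $F''$ is continuous and, by the piecewise smoothness of $F'''$, piecewise $C^1$, hence absolutely continuous. The outcome is that, up to sign, $\omega_k^3 A_k$ and $\omega_k^3 B_k$ are exactly the Fourier coefficients of $F'''$: writing $c_k,\,s_k$ for the cosine and sine Fourier coefficients of $F'''$ on $[0,L]$, I expect the relations
\[
\omega_k^3 A_k = s_k, \qquad \omega_k^3 B_k = -c_k.
\]

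Next I would invoke Bessel's inequality. Since $F'''$ is piecewise smooth it is bounded, hence lies in $L^2([0,L])$, so the series $\sum_{k=1}^\infty(c_k^2+s_k^2)$ converges. Solving the relations above for $A_k,B_k$ and substituting $\omega_k=\frac{2k\pi}{L}$ gives
\[
k^2|A_k| = \frac{L^3}{8\pi^3}\,\frac{|s_k|}{k}, \qquad k^2|B_k| = \frac{L^3}{8\pi^3}\,\frac{|c_k|}{k}.
\]
The decisive step is then a Cauchy--Schwarz estimate,
\[
\sum_{k=1}^\infty \frac{|s_k|}{k} \le \left(\sum_{k=1}^\infty\frac{1}{k^2}\right)^{1/2}\left(\sum_{k=1}^\infty |s_k|^2\right)^{1/2} < \infty,
\]
and likewise with $c_k$ in place of $s_k$. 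Since $\sum_k 1/k^2=\pi^2/6$ and $\sum_k|s_k|^2,\ \sum_k|c_k|^2$ are finite by Bessel, both $\sum_k k^2|A_k|$ and $\sum_k k^2|B_k|$ converge, which is precisely the assertion of the lemma.

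The only genuinely delicate point is the passage from the $\ell^2$ information on the coefficients of $F'''$ (which is all that the hypothesis ``$F'''$ piecewise smooth'' cheaply supplies, via boundedness and Bessel) to the $\ell^1$ statement about $k^2A_k$ and $k^2B_k$; this is exactly what the Cauchy--Schwarz step accomplishes, trading the extra factor $1/k$ against the summability of $\sum 1/k^2$. A secondary technical matter is the rigorous justification of the third integration by parts across the finitely many points where $F'''$ may fail to be smooth: one partitions $[0,L]$ at these points, integrates by parts on each subinterval, and checks that the interior boundary contributions telescope to zero because $F''$ is continuous. Once these two points are handled, the remaining manipulations are entirely routine.
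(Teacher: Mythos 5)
Your proof is correct and follows essentially the same route as the paper's: three integrations by parts (with all boundary terms vanishing by the $L$-periodicity and continuity of $F$, $F'$, $F''$) to identify $\omega_k^3 A_k$ and $\omega_k^3 B_k$ with Fourier coefficients of $F'''$, then Bessel's inequality for $F'''\in L^2([0,L])$, then the Cauchy--Schwarz trade of one factor of $k$ against the summability of $\sum_k 1/k^2$. The only cosmetic difference is that the paper first splits $F$ into its odd part $G$ and even part $H$ and runs the computation on each separately, whereas you integrate by parts on $A_k$ and $B_k$ directly; this changes nothing of substance.
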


\begin{proof} It follows from \eqref{eq:4.1} that
\begin{equation} \label{eq:4.2}
\begin{cases}
\displaystyle\sum_{k=1}^\infty{B_k
\sin{\left(\frac{2k\pi}{L}x\right)}}=\frac{1}{2}\big(F(x)-F(-x)\big)\triangleq G(x),\\
\displaystyle\frac{1}{2}A_0 + \sum_{k=1}^\infty{A_k
\cos{\left(\frac{2k\pi}{L}x\right)}}=\frac{1}{2}\big(F(x)+F(-x)\big)\triangleq
H(x).
\end{cases}
\end{equation}
Since $G(x)$ is a $L$-periodic odd function, its derivative of third
order, i.e., $G'''(x)$, is a $L$-periodic piecewise continuous even
function. So the form of the Fourier series of $G'''(x)$ should be
\begin{equation} \label{eq:4.3}
G'''(x) = \displaystyle\frac{1}{2}B_0^{(3)} +
\sum_{k=1}^\infty{B_k^{(3)} \cos{\left(\frac{2k\pi}{L}x\right)}},
\end{equation}
where $B_0^{(3)}$ and $B_k^{(3)}\;(k=1,2,\cdots)$ stand for the
coefficients of the Fourier series. By Parseval inequality, we
obtain
\begin{equation} \label{eq:4.4}
\displaystyle\frac{1}{2}\left(B_0^{(3)}\right)^2 +
\sum_{k=1}^\infty{\left(B_k^{(3)}\right)^2}=\frac{2}{L}\int_0^L{\left[G'''(x)\right]^2}dx<\infty.
\end{equation}
On the other hand,
\begin{align} \label{eq:4.5}
B_k^{(3)} &= \frac{2}{L}\int_0^L{G'''(x)\cos{\left(\frac{2k\pi}{L}x\right)}dx} \nonumber \\
&=
\frac{2}{L}\left[G''(x)\cos{\left(\frac{2k\pi}{L}x\right)}\right]\Big|_0^L
+
\frac{2k\pi}{L}\frac{2}{L}\int_0^L{G''(x)\sin{\left(\frac{2k\pi}{L}x\right)}dx} \nonumber \\
&=
\frac{2k\pi}{L}\frac{2}{L}\left[G'(x)\sin{\left(\frac{2k\pi}{L}x\right)}\right]\Big|_0^L
-
\left(\frac{2k\pi}{L}\right)^2\frac{2}{L}\int_0^L{G'(x)\cos{\left(\frac{2k\pi}{L}x\right)}dx} \nonumber \\
&=
-\left(\frac{2k\pi}{L}\right)^2\frac{2}{L}\left[G(x)\cos{\left(\frac{2k\pi}{L}x\right)}\right]\Big|_0^L
-
\left(\frac{2k\pi}{L}\right)^3\frac{2}{L}\int_0^L{G(x)\sin{\left(\frac{2k\pi}{L}x\right)}dx} \nonumber \\
&=-\left(\frac{2k\pi}{L}\right)^3B_k.
\end{align}
Here we have made use of the fact that $G(x)$ and $G''(x)$ are
$L$-periodic odd functions. So
\begin{equation} \label{eq:4.6}
\displaystyle\sum_{k=1}^\infty{\left(k^3 B_k\right)^2}=
\left(\frac{L}{2\pi}\right)^6\sum_{k=1}^\infty{\left(B_k^{(3)}\right)^2}
<\infty.
\end{equation}
Then by Cauchy inequality, it yields
\begin{equation} \label{eq:4.7}
\displaystyle\sum_{k=1}^\infty{\left|k^2 B_k\right|} \leq
\sqrt{\sum_{k=1}^\infty{\left(k^3 B_k\right)^2}\cdot
\sum_{k=1}^\infty{\frac{1}{k^2}}}<\infty.
\end{equation}

Similarly, we can show that
$\displaystyle\sum_{k=1}^\infty{\left|k^2 A_k\right|}$ is
convergent.

Thus, the proof of Lemma \ref{lem:4.1} is completed.
\end{proof}

\begin{Remark} \label{rem:4.1}
Obviously, it follows from Lemma \ref{lem:4.1} that the series
$$\displaystyle\sum_{k=1}^\infty{|A_k|},\quad
\displaystyle\sum_{k=1}^\infty{|B_k|},\quad
\displaystyle\sum_{k=1}^\infty{|k A_k|},\quad
\displaystyle\sum_{k=1}^\infty{|k B_k|}$$ are convergent.
\end{Remark}

We now prove Theorem \ref{thm:1-2.1}.

\begin{proof}
Suppose that the Fourier series of the solution $y=y(t,\theta)$ to
\eqref{eq:1-2.5} reads
\begin{equation} \label{eq:4.8}
y(t,\theta) = \displaystyle\frac{1}{2}a_0(t) +
\sum_{k=1}^\infty{a_k(t) \cos{\left(\frac{2k\pi}{L}\theta\right)}} +
\sum_{k=1}^\infty{b_k(t) \sin{\left(\frac{2k\pi}{L}\theta\right)}},
\end{equation}
where $a_0(t),\;a_k(t),\;b_k(t)$ stand for the coefficients of the
Fourier series. Then by the first equation in \eqref{eq:1-2.5}, we
obtain
\begin{equation} \label{eq:4.9}
\begin{cases}
a_0''(t)=0,\\
a_k''(t)+\displaystyle\left(\frac{2k\pi}{L}\right)^2a_k(t)=0,\\
b_k''(t)+\displaystyle\left(\frac{2k\pi}{L}\right)^2b_k(t)=0.
\end{cases}
\end{equation}
In particular,
\begin{equation} \label{eq:4.10}
\begin{cases}
f(\theta) = \displaystyle\frac{1}{2}a_0(0) +
\sum_{k=1}^\infty{a_k(0) \cos{\left(\frac{2k\pi}{L}\theta\right)}} +
\sum_{k=1}^\infty{b_k(0) \sin{\left(\frac{2k\pi}{L}\theta\right)}},\\
g(\theta) = \displaystyle\frac{1}{2}a_0(T) +
\sum_{k=1}^\infty{a_k(T) \cos{\left(\frac{2k\pi}{L}\theta\right)}} +
\sum_{k=1}^\infty{b_k(T) \sin{\left(\frac{2k\pi}{L}\theta\right)}},
\end{cases}
\end{equation}
in which
\begin{equation} \label{eq:4.11}
\begin{cases}
a_0(0)=\displaystyle\frac{2}{L}\int_0^L{f(x)}dx,\\
a_0(T)=\displaystyle\frac{2}{L}\int_0^L{g(x)}dx,
\end{cases}
\end{equation}
\begin{equation} \label{eq:4.12}
\begin{cases}
a_k(0)=\displaystyle\frac{2}{L}\int_0^L{f(x)\cos{\left(\frac{2k\pi}{L}x\right)}}dx,\\
a_k(T)=\displaystyle\frac{2}{L}\int_0^L{g(x)\cos{\left(\frac{2k\pi}{L}x\right)}}dx,
\end{cases}
\end{equation}
and
\begin{equation} \label{eq:4.13}
\begin{cases}
b_k(0)=\displaystyle\frac{2}{L}\int_0^L{f(x)\sin{\left(\frac{2k\pi}{L}x\right)}}dx,\\
b_k(T)=\displaystyle\frac{2}{L}\int_0^L{g(x)\sin{\left(\frac{2k\pi}{L}x\right)}}dx.
\end{cases}
\end{equation}
It follows from \eqref{eq:4.9} that
\begin{equation} \label{eq:4.14}
\begin{cases}
a_0(t)=\alpha_0 + \beta_0 t,\\
a_k(t)=\displaystyle\alpha_k\cos{\left(\frac{2k\pi}{L}t\right)} +
\beta_k\sin{\left(\frac{2k\pi}{L}t\right)},\\
b_k(t)=\displaystyle\bar{\alpha}_k\cos{\left(\frac{2k\pi}{L}t\right)}
+ \bar{\beta}_k\sin{\left(\frac{2k\pi}{L}t\right)},
\end{cases}
\end{equation}
where $\alpha_0,\;\beta_0,\; \alpha_k,\;\beta_k,\;\bar{\alpha}_k$
and $\bar{\beta}_k$ are some constants. Comparing
\eqref{eq:4.11}-\eqref{eq:4.13} with \eqref{eq:4.14} gives
\begin{equation} \label{eq:4.15}
\begin{cases}
\alpha_0=\displaystyle\frac{2}{L}\int_0^L{f(x)}dx,\\
\alpha_0 + \beta_0 T=\displaystyle\frac{2}{L}\int_0^L{g(x)}dx,
\end{cases}
\end{equation}
\begin{equation} \label{eq:4.16}
\begin{cases}
\alpha_k=\displaystyle\frac{2}{L}\int_0^L{f(x)\cos{\left(\frac{2k\pi}{L}x\right)}}dx,\\
\displaystyle\alpha_k\cos{\left(\frac{2k\pi}{L}T\right)} +
\beta_k\sin{\left(\frac{2k\pi}{L}T\right)}=\displaystyle\frac{2}{L}\int_0^L{g(x)\cos{\left(\frac{2k\pi}{L}x\right)}}dx
\end{cases}
\end{equation}
and
\begin{equation} \label{eq:4.17}
\begin{cases}
\bar{\alpha}_k=\displaystyle\frac{2}{L}\int_0^L{f(x)\sin{\left(\frac{2k\pi}{L}x\right)}}dx,\\
\displaystyle\bar{\alpha}_k\cos{\left(\frac{2k\pi}{L}T\right)} +
\bar{\beta}_k\sin{\left(\frac{2k\pi}{L}T\right)}=\displaystyle\frac{2}{L}\int_0^L{g(x)\sin{\left(\frac{2k\pi}{L}x\right)}}dx.
\end{cases}
\end{equation}
That is,
\begin{equation} \label{eq:4.18}
\begin{cases}
\alpha_0=\displaystyle\frac{2}{L}\int_0^L{f(x)}dx,\\
\beta_0
=\displaystyle\frac{2}{LT}\int_0^L{\left[g(x)-f(x)\right]}dx,
\end{cases}
\end{equation}
\begin{equation} \label{eq:4.19}
\begin{cases}
\alpha_k=\displaystyle\frac{2}{L}\int_0^L{f(x)\cos{\left(\frac{2k\pi}{L}x\right)}}dx,\\
\beta_k=
\begin{cases}
0,\quad {\rm if}\;\; \displaystyle\frac{2kT}{L}\in \mathds{N},\\
\displaystyle\frac{2}{L\sin{\left(\frac{2k\pi}{L}T\right)}}
\int_0^L{\left[g(x)-f(x)\cos{\left(\frac{2k\pi}{L}T\right)}\right]\cos{\left(\frac{2k\pi}{L}x\right)}}dx,\quad
{\rm if}\;\; \displaystyle\frac{2kT}{L}\not\in \mathds{N}
\end{cases}
\end{cases}
\end{equation}
and
\begin{equation} \label{eq:4.20}
\begin{cases}
\bar\alpha_k=\displaystyle\frac{2}{L}\int_0^L{f(x)\sin{\left(\frac{2k\pi}{L}x\right)}}dx,\\
\bar\beta_k=
\begin{cases}
0,\quad {\rm if}\;\;  \displaystyle\frac{2kT}{L}\in \mathds{N},\\
\displaystyle\frac{2}{L\sin{\left(\frac{2k\pi}{L}T\right)}}
\int_0^L{\left[g(x)-f(x)\cos{\left(\frac{2k\pi}{L}T\right)}\right]\sin{\left(\frac{2k\pi}{L}x\right)}}dx,\quad
{\rm if}\;\; \displaystyle\frac{2kT}{L}\not\in \mathds{N}.
\end{cases}
\end{cases}
\end{equation}
Here we would like to point out that, when
$\displaystyle\frac{2kT}{L}\in \mathds{N}$, i.e.,
$\displaystyle\sin{\left(\frac{2k\pi}{L}T\right)}=0$, in
\eqref{eq:4.19}-\eqref{eq:4.20} we take the corresponding
coefficients $\beta_k$ and $\bar\beta_k$ as zero. In fact, we may
also take $\beta_k$ and $\bar\beta_k$ as some series satisfying
\begin{equation} \label{eq:4.21}
\displaystyle\sum_{k=1}^\infty{|k^2\beta_k|}<\infty,\quad
\sum_{k=1}^\infty{|k^2 \bar\beta_k|}<\infty.
\end{equation} The
different choice of $\beta_k$ and $\bar\beta_k$ can give the
different solution. This implies that the solution of the TBVP under
consideration is not unique.

Since $\displaystyle\frac{T}{L}$ is a rational number and
$\displaystyle\frac{2T}{L}\not\in \mathds{N}$,
$\displaystyle\frac{2T}{L}$ can be expressed as a fraction
$\displaystyle\frac{p}{q}$, where $p$ and $q$ are two irreducible
integers and $q$ is not less than $2$, i.e., $q\geq 2$. By the
property of sinusoid, we have
\begin{equation} \label{eq:4.22}
\left|\displaystyle\sin{\left(\frac{2k\pi}{L}T\right)}\right|=
\left|\displaystyle\sin{\left(\frac{kp\pi}{q}\right)}\right|\geq
\left|\displaystyle\sin{\left(\frac{\pi}{q}\right)}\right|\triangleq
C_s,\quad \text{if $k$ is not the multiple of $q$}.
\end{equation}
So for the given $T$ and $L$, $C_s$ is a constant. Then it follows
from \eqref{eq:4.14}, \eqref{eq:4.19}, \eqref{eq:4.21} and Remark
\ref{rem:4.1} that
\begin{align} \label{eq:4.23}
\sum_{k=1}^\infty{\left|a_k(t)\right|}
&\leq\sum_{k=1}^\infty{\left(\left|\alpha_k\right|+\left|\beta_k\right|\right)} \nonumber \\
&\leq\displaystyle\left(1+\frac{1}{C_s}\right)
\sum_{k=1}^\infty{\left|\frac{2}{L}\int_0^L{f(x)\cos{\left(\frac{2k\pi}{L}x\right)}}dx\right|}
+\frac{1}{C_s}\sum_{k=1}^\infty{\left|\frac{2}{L}\int_0^L{g(x)\cos{\left(\frac{2k\pi}{L}x\right)}}dx\right|} \nonumber \\
&<\infty.
\end{align}

Similarly, we obtain from \eqref{eq:4.14}, \eqref{eq:4.20},
\eqref{eq:4.21} and Remark \ref{rem:4.1} that
\begin{equation} \label{eq:4.24}
\sum_{k=1}^\infty{\left|b_k(t)\right|} <\infty.
\end{equation}
Combining \eqref{eq:4.23}, \eqref{eq:4.24} and \eqref{eq:4.15} gives
the convergence of the Fourier series \eqref{eq:4.8} of the solution
$y=y(t,\theta)$ immediately.

Moreover, by Lemma \ref{lem:4.1}, Remark \ref{rem:4.1} and
\eqref{eq:4.21} we obtain
\begin{equation} \label{eq:4.25}
\sum_{k=1}^\infty{\left|k \alpha_k\right|} <\infty,\quad
\sum_{k=1}^\infty{\left|k \beta_k\right|} <\infty,\quad
\sum_{k=1}^\infty{\left|k \bar\alpha_k\right|} <\infty,\quad
\sum_{k=1}^\infty{\left|k \bar\beta_k\right|} <\infty
\end{equation}
and
\begin{equation} \label{eq:4.26}
\sum_{k=1}^\infty{\left|k^2 \alpha_k\right|} <\infty,\quad
\sum_{k=1}^\infty{\left|k^2 \beta_k\right|} <\infty,\quad
\sum_{k=1}^\infty{\left|k^2 \bar\alpha_k\right|} <\infty,\quad
\sum_{k=1}^\infty{\left|k^2 \bar\beta_k\right|} <\infty.
\end{equation}
Using \eqref{eq:4.25} and \eqref{eq:4.26}, by a similar argument as
used above, we can prove
\begin{equation} \label{eq:4.27}
\sum_{k=1}^\infty{\left|k a_k(t)\right|} <\infty,\quad
\sum_{k=1}^\infty{\left|k b_k(t)\right|} <\infty,\quad
\sum_{k=1}^\infty{\left|a_k'(t)\right|} <\infty,\quad
\sum_{k=1}^\infty{\left|b_k'(t)\right|} <\infty
\end{equation}
and
\begin{equation}\left\{\begin{array}{l} \label{eq:4.28}
{\displaystyle \sum_{k=1}^\infty{\left|k^2 a_k(t)\right|}
<\infty,\quad \sum_{k=1}^\infty{\left|k^2 b_k(t)\right|}
<\infty,\quad \sum_{k=1}^\infty{\left|k a_k'(t)\right|}
<\infty,\quad
\sum_{k=1}^\infty{\left|k b_k'(t)\right|} <\infty, }\vspace{2mm}\\
{\displaystyle \sum_{k=1}^\infty{\left|a_k''(t)\right|}
<\infty,\quad \sum_{k=1}^\infty{\left|b_k''(t)\right|} <\infty.}
\end{array}\right.\end{equation}
Obviously, \eqref{eq:4.27} and \eqref{eq:4.28} imply that the
Fourier series of the first-order derivatives of $y$ (i.e., $y_t$
and $y_{\theta}$) and second-order derivatives of $y$ (i.e.,
$y_{tt},\;y_{t\theta}$ and $y_{\theta\theta}$) are also convergent,
respectively.

Thus, the proof of Theorem \ref{thm:1-2.1} is completed.
\end{proof}

\begin{Remark} \label{rem:4.2}
By \eqref{eq:4.10}, \eqref{eq:4.14}, \eqref{eq:4.18}, the first
equation in \eqref{eq:4.19} and the first equation in
\eqref{eq:4.20}, if $\displaystyle\frac{2T}{L}\in \mathds{N}$, then
$g(\theta)$ satisfies
\begin{align} \label{eq:4.29}
g(\theta) &= \displaystyle\frac{1}{2}\left(\alpha_0+\beta_0 T\right)
+ \sum_{k=1}^\infty{\alpha_k \cos{\left(\frac{2k\pi T}{L}\right)}
\cos{\left(\frac{2k\pi}{L}\theta\right)}} +
\sum_{k=1}^\infty{\bar{\alpha}_k \cos{\left(\frac{2k\pi
T}{L}\right)} \sin{\left(\frac{2k\pi}{L}\theta\right)}} \nonumber \\
&=\displaystyle\frac{1}{L}\int_0^L{g(x)}dx +
\sum_{k=1}^\infty{\frac{2}{L}
\int_0^L{f(x)\cos{\left(\frac{2k\pi}{L}x\right)}}dx\cos{\left(\frac{2k\pi}{L}\theta\right)}\cos{\left(\frac{2k\pi
T}{L}\right)}} + \nonumber \\
&\qquad\qquad\qquad\quad\sum_{k=1}^\infty{\frac{2}{L}
\int_0^L{f(x)\sin{\left(\frac{2k\pi}{L}x\right)}}dx\sin{\left(\frac{2k\pi}{L}\theta\right)}
\cos{\left(\frac{2k\pi T}{L}\right)}}.
\end{align}
Then in this case, for any given $f(\theta)$, the terminal value
$g(\theta)$ can not be arbitrary. In particular, if
$\displaystyle\frac{T}{L}\in \mathds{N}$, then it should hold that
\begin{equation} \label{eq:4.30}
g(\theta) =
\displaystyle\frac{1}{L}\int_0^L{\left[g(x)-f(x)\right]}dx +
f(\theta).
\end{equation}
\end{Remark}

\begin{Remark} \label{rem:4.3}
If $\displaystyle\frac{T}{L}$ is a irrational number, then
\eqref{eq:4.22} is incorrect, and then we can not get the
convergence in \eqref{eq:4.23}, etc.
\end{Remark}

\begin{Remark} \label{rem:4.4}
Theorem \ref{thm:1-2.1} shows that, for some $T$ satisfied the
assumptions in Theorem \ref{thm:1-2.1}, there exist some initial
velocity $v(\theta)$ such that the Cauchy problem for the wave
equation in \eqref{eq:1-2.5} with the initial data
\begin{equation} \label{eq:4.31}
t=0:\;\;y=f(\theta),\quad y_t=v(\theta)
\end{equation}
has a solution $y=y(t,\theta)\in C^2([0,T]\times\mathds{R})$ which
satisfies the terminal condition in \eqref{eq:1-2.5}, i.e., the
third equation in \eqref{eq:1-2.5}.
\end{Remark}

Theorem \ref{thm:1-2.2} comes from Theorem \ref{thm:1-2.1} directly.

\section{Wave equation with homogeneous Dirichlet or Neumann boundary
conditions} \label{sec:5}

In this section, we investigate the TBVP for the wave equation
defined on the domain $[0,T]\times[0,L]$ with homogeneous Dirichlet
boundary conditions and homogeneous Neumann boundary conditions,
respectively, where $T,\;L>0$ are two given real numbers.

More precisely, we consider the following problem for the wave
equation with the homogeneous Dirichlet boundary conditions
\begin{equation} \label{eq:5.1}
\begin{cases}
y_{tt}-y_{xx} = 0,\\
y(0,x) = f(x),\\
y(T,x) = g(x),\\
y(t,0) = y(t,L) = 0,
\end{cases}
\end{equation}
and the problem for the wave equation with the homogeneous Neumann
boundary conditions
\begin{equation} \label{eq:5.2}
\begin{cases}
y_{tt}-y_{xx} = 0,\\
y(0,x) = f(x),\\
y(T,x) = g(x),\\
y_x(t,0) = y_x(t,L) = 0,
\end{cases}
\end{equation}
where $y=y(t,x)$ is the unknown function of
$(t,x)\in[0,T]\times[0,L]$, $f(x)$ and $g(x)$ are two given $C^{3}$
functions of $x\in [0,L]$. Moreover, $f$ and $g$ satisfy the
compatibility conditions
\begin{equation} \label{eq:5.3}
f(x)\Big|^{x=0}_{x=L}=g(x)\Big|^{x=0}_{x=L}=0,\quad
f''(x)\Big|^{x=0}_{x=L}=g''(x)\Big|^{x=0}_{x=L}=0, \quad\text{for
the Dirichlet conditions};
\end{equation}

\begin{equation} \label{eq:5.4}
f'(x)\Big|^{x=0}_{x=L}=g'(x)\Big|^{x=0}_{x=L}=0,\quad\text{for the
Neumann conditions}.
\end{equation}
We have the following theorem which can be viewed as consequences of
Theorem \ref{thm:1-2.1}.

\begin{Theorem}\label{thm:5.1}
(A) Suppose that the compatibility conditions in \eqref{eq:5.3} are
satisfied, $\displaystyle\frac{T}{L}$ is a rational number and
$\displaystyle\frac{T}{L}\not\in \mathds{N}$. Then the problem
\eqref{eq:5.1} admits a $C^2$ solution $y=y(t,x)$ on the domain
$[0,T]\times [0,L]$.

(B) Suppose that the compatibility conditions in \eqref{eq:5.4} are
satisfied, $\displaystyle\frac{T}{L}$ is a rational number and
$\displaystyle\frac{T}{L}\not\in \mathds{N}$. Then the problem
\eqref{eq:5.2} admits a $C^2$ solution $y=y(t,x)$ on the domain
$[0,T]\times [0,L]$.
\end{Theorem}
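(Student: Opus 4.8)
The plan is to reduce both parts to Theorem \ref{thm:1-2.1} by extending the data to $2L$-periodic functions on $\mathds{R}$ of definite parity and then restricting the resulting periodic solution back to $[0,L]$. For part (A), I would first extend $f$ and $g$ from $[0,L]$ to odd, $2L$-periodic functions $\hat f$ and $\hat g$ on $\mathds{R}$ (odd reflection across $x=0$, then $2L$-periodic continuation, which is automatically odd about $x=L$ as well). The role of the compatibility conditions \eqref{eq:5.3} is precisely to guarantee regularity of this extension at the gluing points $x\in L\mathds{Z}$: since $F$ and $F''$ are odd they must vanish at $0$ and $L$, so $f(0)=f(L)=0$ and $f''(0)=f''(L)=0$ (and likewise for $g$) are exactly what force the one-sided derivatives to match, making $\hat f,\hat g$ lie in the $C^2$-with-piecewise-smooth-third-derivative class to which Theorem \ref{thm:1-2.1} (via Lemma \ref{lem:4.1}) applies. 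For part (B) I would instead use the even, $2L$-periodic extension, and now the conditions \eqref{eq:5.4}, $f'(0)=f'(L)=0=g'(0)=g'(L)$, are what make the first derivatives match across the gluing points and yield extensions of the same regularity class.

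The second step is to apply Theorem \ref{thm:1-2.1} with the period taken to be $2L$ rather than $L$. With period $2L$ the hypothesis of that theorem reads: $T/(2L)$ is rational and $2T/(2L)=T/L\notin\mathds{N}$. Since $T/L$ is assumed rational (hence so is $T/(2L)$) and $T/L\notin\mathds{N}$ by hypothesis, these conditions hold, and Theorem \ref{thm:1-2.1} produces a $2L$-periodic $C^2$ solution $\hat y(t,\theta)$ of the wave equation on $[0,T]\times\mathds{R}$ with $\hat y(0,\cdot)=\hat f$ and $\hat y(T,\cdot)=\hat g$.

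The key remaining point is to show that $\hat y$ inherits the parity of its data, so that the boundary conditions come for free. I would argue this directly from the Fourier construction in the proof of Theorem \ref{thm:1-2.1}: for odd data all the constant and cosine Fourier coefficients of $\hat f,\hat g$ vanish, so by \eqref{eq:4.18}--\eqref{eq:4.19} the coefficients $\alpha_0,\beta_0,\alpha_k,\beta_k$ all vanish and the solution reduces to $\hat y(t,\theta)=\sum_{k\geq 1} b_k(t)\sin(k\pi\theta/L)$; this is odd about $x=0$ and, since $\sin(k\pi)=0$, vanishes at $\theta=0$ and $\theta=L$, giving the homogeneous Dirichlet condition automatically. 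Symmetrically, for even data only the constant and cosine terms survive, $\hat y(t,\theta)=\tfrac12 a_0(t)+\sum_{k\geq1} a_k(t)\cos(k\pi\theta/L)$, whose $\theta$-derivative contains only $\sin(k\pi\theta/L)$ and hence vanishes at $\theta=0,L$, giving the homogeneous Neumann condition. Restricting $\hat y$ to $[0,T]\times[0,L]$ then yields the desired $C^2$ solution in each case.

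I expect the main obstacle to be the bookkeeping at the gluing points: verifying carefully that \eqref{eq:5.3} (resp. \eqref{eq:5.4}) is exactly the right set of matching conditions to place the odd (resp. even) $2L$-periodic extension in the regularity class required by Theorem \ref{thm:1-2.1}, and checking that in the degenerate modes where $\sin(k\pi T/L)=0$ (i.e. $k$ a multiple of the denominator of $T/L$ in lowest terms) the free coefficients $\beta_k,\bar\beta_k$ can consistently be chosen to be zero without destroying the parity or the convergence established in \eqref{eq:4.23}--\eqref{eq:4.28}.
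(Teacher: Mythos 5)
Your proposal follows essentially the same route as the paper's own proof: odd (resp.\ even) $2L$-periodic extension of the data, whose $C^2$/$C^3$ regularity at the gluing points is exactly what the compatibility conditions \eqref{eq:5.3} (resp.\ \eqref{eq:5.4}) guarantee, followed by an application of Theorem \ref{thm:1-2.1} with period $2L$ (so its hypotheses become $T/L$ rational and $T/L\notin\mathds{N}$), parity of the resulting solution yielding the homogeneous Dirichlet (resp.\ Neumann) conditions, and restriction to $[0,T]\times[0,L]$. The only substantive difference is that you verify the parity of the constructed solution explicitly from the Fourier coefficient formulas \eqref{eq:4.18}--\eqref{eq:4.20}, including the degenerate modes where $\beta_k,\bar\beta_k$ are set to zero, a point the paper passes over as ``obvious.''
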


\begin{proof}
For the case of the Dirichlet boundary conditions, we extend any
$C^2$ solution $y=y(t,x)$ to the domain $[0,T]\times [-L,L]$ by
\begin{equation} \label{eq:5.5}
y(t,x)=-y(t,-x), \quad \text{for $x\in[-L,0]$},
\end{equation}
and then extend $y=y(t,x)$ to be $2L$-periodic. See \cite{kong3} and
\cite{kong4}. One easily checks that if the given initial/terminal
data have the form in \eqref{eq:5.1}, the extended initial/terminal
data are $2L$-periodic and given by (see \cite{kong3}-\cite{kong4})
\begin{equation} \label{eq:5.6}
\tilde{f}(x)\triangleq y(0,x)=
\begin{cases}
-f(-x),\quad \text{for $x\in[-L,0]$,}\\
f(x),\quad \text{for $x\in[0,L]$,}
\end{cases}
\end{equation}

\begin{equation} \label{eq:5.7}
\tilde{g}(x)\triangleq y(T,x)=
\begin{cases}
-g(-x),\quad \text{for $x\in[-L,0]$,}\\
g(x),\quad \text{for $x\in[0,L]$,}
\end{cases}
\end{equation}
respectively. Thus, we obtain an extended TBVP for the wave equation
defined on the strip $[0,T]\times \mathds{R}$. When the
compatibility conditions in \eqref{eq:5.3} are satisfied, this
extended $y=y(t,x)$ is a $C^2$ solution of the extended TBVP with
the extended initial/terminal data
$\left(\tilde{f}(x),\tilde{g}(x)\right)$. Therefore, we may make use
of Theorem \ref{thm:1-2.1} and obtain the solution, denoted by
$\tilde{y}=\tilde{y}(t,x)$, to the extended TBVP defined on the
strip $[0,T]\times \mathds{R}$. Obviously,
$\tilde{y}=\tilde{y}(t,x)$ is a $2L$-periodic odd $C^2$ function. So
the Dirichlet boundary conditions are satisfied naturally. Let
$y=y(t,x)$ be the restriction of $\tilde{y}=\tilde{y}(t,x)$ on the
region $[0,T]\times[0,L]$. It is easy to see that $y=y(t,x)$ is the
$C^2$ solution to \eqref{eq:5.1}.

Similarly, for the case of Neumann boundary conditions, we can
extend $y=y(t,x)$ by
\begin{equation} \label{eq:5.8}
y(t,x)=y(t,-x), \quad \text{for $x\in[-L,0]$}.
\end{equation}
See \cite{kong3}-\cite{kong4}. Then, $y=y(t,x)$ can be extended to
be a classical $2L$-periodic $C^2$ solution of the extended TBVP for
the wave equation with $2L$-periodic initial/terminal data given by
\begin{equation} \label{eq:5.9}
\tilde{f}(x)\triangleq y(0,x)=
\begin{cases}
f(-x),\quad \text{for $x\in[-L,0]$,}\\
f(x),\quad \text{for $x\in[0,L]$,}
\end{cases}
\end{equation}

\begin{equation} \label{eq:5.10}
\tilde{g}(x)\triangleq y(T,x)=
\begin{cases}
g(-x),\quad \text{for $x\in[-L,0]$,}\\
g(x),\quad \text{for $x\in[0,L]$,}
\end{cases}
\end{equation}
respectively. When the compatibility conditions in \eqref{eq:5.4}
are satisfied, by a similar argument as used above, we can prove the
part (B) in Theorem \ref{thm:5.1}.

Thus, the proof of Theorem \ref{thm:5.1} is completed.
\end{proof}

\begin{Remark} \label{rem:5.1}
Theorem \ref{thm:5.1} shows that the wave equation defined on the
domain $[0,T]\times [0,L]$ still possesses the exact controllability
in the cases of homogeneous Dirichlet boundary conditions and
homogeneous Neumann boundary conditions, provided that the
corresponding compatibility conditions are satisfied.\end{Remark}

\section{Wave equation with inhomogeneous
Dirichlet or Neumann boundary conditions} \label{sec:6}

This section is devoted to the exact controllability of the wave
equation defined on the domain $[0,T]\times[0,L]$ with inhomogeneous
Dirichlet boundary conditions and inhomogeneous Neumann boundary
conditions, respectively, where $T,\;L$ are two given positive real
numbers.

More precisely, we consider the following TBVP for the wave equation
with the inhomogeneous Dirichlet boundary conditions
\begin{equation} \label{eq:6.1}
\begin{cases}
y_{tt}-y_{xx} = 0,\\
y(0,x) = f(x),\\
y(T,x) = g(x),\\
y(t,0) = h(t),\\
y(t,L) = l(t),
\end{cases}
\end{equation}
and the TBVP for the wave equation with the inhomogeneous Neumann
boundary conditions
\begin{equation} \label{eq:6.2}
\begin{cases}
y_{tt}-y_{xx} = 0,\\
y(0,x) = f(x),\\
y(T,x) = g(x),\\
y_x(t,0) = H(t),\\
y_x(t,L) = K(t),
\end{cases}
\end{equation}
where $y=y(t,x)$ is the unknown function of $(t,x)\in
[0,T]\times[0,L]$, $f(x), g(x)$ are two given $C^{3}$ functions of
$x\in [0,L]$ which stands for the initial data and terminal data,
respectively, $h(t),\; l(t)$ are two given $C^{3}$ functions of
$t\in [0,T]$, and $H(t),\; K(t)$ are two given $C^{2}$ functions of
$t\in [0,T]$. Moreover, we assume that $f$, $g$, $h$, $l$, $H$ and
$K$ satisfy the following compatibility conditions
\begin{equation} \label{eq:6.3}
\begin{cases}
f(0) = h(0),\quad f''(0) = h''(0),\\
f(L) = l(0),\quad f''(L) = l''(0),\\
g(0) = h(T),\quad g''(0) = h''(T),\\
g(L) = l(T),\quad g''(L) = l''(T),
\end{cases}
\quad\text{for the Dirichlet conditions};
\end{equation}
\begin{equation} \label{eq:6.4}
\begin{cases}
f'(0) = H(0),\\
f'(L) = K(0),\\
g'(0) = H(T),\\
g'(L) = K(T),
\end{cases}
\quad\text{for the Neumann conditions}.
\end{equation}
We have
\begin{Theorem} \label{thm:6.1}
(A) Suppose that the compatibility conditions in \eqref{eq:6.3} are
satisfied, $\displaystyle\frac{T}{L}$ is a rational number and
$\displaystyle\frac{T}{L}\not\in\mathds{N}$. Then the problem
\eqref{eq:6.1} admits a $C^2$ solution $y=y(t,x)$ on the domain
$[0,T]\times[0,L]$.

(B) Suppose that the compatibility conditions in \eqref{eq:6.4} are
satisfied, $\displaystyle\frac{T}{L}$ is a rational number and $T<
L$. Then the problem \eqref{eq:6.2} admits a $C^2$ solution
$y=y(t,x)$ on the domain $[0,T]\times[0,L]$.
\end{Theorem}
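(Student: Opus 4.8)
The plan is to reduce each inhomogeneous boundary value problem to the corresponding homogeneous one already settled in Theorem \ref{thm:5.1}, by subtracting a suitable \emph{lifting} of the boundary data. For part (A) I would first construct an auxiliary function $w=w(t,x)$, defined and $C^3$ on $[0,T]\times[0,L]$, that solves the homogeneous wave equation $w_{tt}-w_{xx}=0$ and carries the prescribed Dirichlet traces $w(t,0)=h(t)$, $w(t,L)=l(t)$. Writing $w$ in d'Alembert form $w(t,x)=A(t+x)+B(t-x)$, the two boundary conditions become the functional relations $A(t)+B(t)=h(t)$ and $A(t+L)+B(t-L)=l(t)$ on $[0,T]$; eliminating $B$ yields the recursion $A(s+2L)=A(s)+l(s+L)-h(s)$, so that choosing $A$ freely on $[0,L]$ and $B$ freely on $[-L,0]$ and then propagating along characteristics determines $A,B$ on the ranges needed to cover the strip. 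Once $w$ is in hand, I set $z=y-w$; then $z$ solves the homogeneous wave equation with homogeneous Dirichlet conditions $z(t,0)=z(t,L)=0$ and modified data $\tilde f=f-w(0,\cdot)$, $\tilde g=g-w(T,\cdot)$.

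The second step is to check that $\tilde f,\tilde g$ meet the hypotheses of Theorem \ref{thm:5.1}(A), and here the compatibility conditions \eqref{eq:6.3} are exactly what is required. Using $w(t,0)=h(t)$, $w(t,L)=l(t)$ together with $w_{xx}=w_{tt}$, one computes $\tilde f(0)=f(0)-h(0)$, $\tilde f''(0)=f''(0)-h''(0)$, and the analogous expressions at $x=L$ and at $t=T$, so that all eight relations in \eqref{eq:6.3} translate precisely into the homogeneous Dirichlet compatibility \eqref{eq:5.3} for $\tilde f,\tilde g$. Since $\frac{T}{L}$ is rational with $\frac{T}{L}\notin\mathds{N}$, Theorem \ref{thm:5.1}(A) furnishes a $C^2$ solution $z$ on $[0,T]\times[0,L]$, and $y=z+w$ then solves \eqref{eq:6.1}.

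For part (B) I would follow the same scheme, now building a $C^3$ lifting $w$ solving the wave equation with Neumann traces $w_x(t,0)=H(t)$, $w_x(t,L)=K(t)$; in d'Alembert form this reads $P(t)-Q(t)=H(t)$ and $P(t+L)-Q(t-L)=K(t)$ on $[0,T]$, where $P=A'$ and $Q=B'$. This is where the stronger hypothesis $T<L$ enters: when $T<L$ one has $t-L<0$ for all $t\in[0,T]$, so the backward characteristic issuing from $x=L$ never reaches the boundary $x=0$ within the time interval, and the two relations can be solved in a single pass (choosing $P$ freely on $[0,L]$ and $Q$ freely on $[-L,0]$) without the feedback loop between the two boundaries that would otherwise produce resonance constraints. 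Setting $z=y-w$ again yields a homogeneous Neumann problem whose reduced data satisfy \eqref{eq:5.4} precisely by virtue of \eqref{eq:6.4} (which give $\tilde f'(0)=f'(0)-H(0)=0$, and so on); since $T<L$ forces $\frac{T}{L}\notin\mathds{N}$, Theorem \ref{thm:5.1}(B) applies and $y=z+w$ solves \eqref{eq:6.2}.

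The main obstacle I anticipate is the construction of the liftings $w$ as genuinely $C^3$ on the whole strip (so that $w(0,\cdot),w(T,\cdot)\in C^3$ and Theorem \ref{thm:5.1} is applicable): the pieces of $A$ and $B$ produced by the characteristic propagation must join smoothly across the lines $t\pm x=\text{const}$, and one has to verify that the matching of values and of derivatives up to third order can either be arranged by the free initial choices or propagates automatically through the recursion. I expect this bookkeeping—rather than any genuinely new analytic difficulty—to be the technical heart of the argument, since all the controllability content is supplied by Theorem \ref{thm:5.1}, hence ultimately by Theorem \ref{thm:1-2.1}.
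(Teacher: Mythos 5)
Your proposal is correct and takes essentially the same route as the paper: the paper also subtracts a wave-equation-solving lifting of the boundary data---it extends $h$ (resp.\ $H$) to satisfy $h(t+L)+h(t-L)=2l(t)$ (resp.\ $H(t+L)+H(t-L)=2K(t)$) and sets $w(t,x)=\frac{h(t+x)+h(t-x)}{2}$ (resp.\ $w(t,x)=\frac{1}{2}\int_{t-x}^{t+x}H(\xi)\,d\xi$), which is exactly your d'Alembert lifting specialized to $A=B=h/2$---and then reduces to Theorem \ref{thm:5.1} after checking that \eqref{eq:6.3}/\eqref{eq:6.4} turn into \eqref{eq:5.3}/\eqref{eq:5.4}. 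The one difference worth noting is that in part (A) the paper sidesteps the recursion and $C^3$-gluing bookkeeping you anticipate for $T>L$: since $\frac{T}{L}\not\in\mathds{N}$, it assumes $T<L$ without loss of generality by exchanging the roles of $t$ and $x$ (the equation, domain and compatibility conditions are symmetric under this swap), so the extension of $h$ is done in a single pass exactly as in your part (B).
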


\begin{proof}
For the problem \eqref{eq:6.1}, noting that
$\displaystyle\frac{T}{L}\not\in\mathds{N}$, without loss of
generality, we may assume that $T<L$ (otherwise, we only need
exchange the $t$-axes and the $x$-axes and then reduce the problem
to the case of $T<L$). In this situation, we can extend $h(t)$ to
the interval $[-T-L,T+L]$ by
\begin{equation} \label{eq:6.5}
h(t+L)+h(t-L)=2l(t), \quad \text{for $t\in[0,T]$}.
\end{equation}
Moreover we require that the extended $h(t)$ is a $C^3$ function on
the interval $[-T-L,T+L]$.

Introduce
\begin{equation} \label{eq:6.6}
\tilde{y}(t,x)=y(t,x)-\displaystyle\frac{h(t+x)+h(t-x)}{2}.
\end{equation}
Then by \eqref{eq:6.5}, the problem \eqref{eq:6.1} becomes
\begin{equation} \label{eq:6.7}
\begin{cases}
\tilde{y}_{tt}-\tilde{y}_{xx} = 0,\\
\tilde{f}(x) \triangleq\tilde{y}(0,x) = f(x)-\displaystyle\frac{h(x)+h(-x)}{2},\\
\tilde{g}(x) \triangleq\tilde{y}(T,x) = g(x)-\displaystyle\frac{h(T+x)+h(T-x)}{2},\\
\tilde{h}(t) \triangleq\tilde{y}(t,0) = 0,\\
\tilde{l}(t) \triangleq\tilde{y}(t,L) = 0.
\end{cases}
\end{equation}
And by \eqref{eq:6.3}, \eqref{eq:6.5}, the boundary data
$\tilde{f}$, $\tilde{g}$, $\tilde{h}$ and $\tilde{l}$ satisfy the
compatibility conditions
\begin{equation} \label{eq:6.8}
\begin{cases}
\tilde{f}(0) = \tilde{f}(L) = 0,\quad \tilde{f}''(0) = \tilde{f}''(L) = 0,\\
\tilde{g}(0) = \tilde{g}(L) = 0,\quad \tilde{g}''(0) =
\tilde{g}''(L) = 0.
\end{cases}
\end{equation}
Therefore, we can make use of Theorem \ref{thm:5.1} (A) and obtain
the $C^2$ solution to the problem \eqref{eq:6.7}, and then the $C^2$
solution to \eqref{eq:6.1}. This proves the part (A) in Theorem
\ref{thm:6.1}.

Similarly, for the problem \eqref{eq:6.2}, noting $T<L$, we can
extend $H(t)$ to the interval $[-L,T+L]$ by
\begin{equation} \label{eq:6.9}
H(t+L)+H(t-L)=2K(t), \quad \text{for $t\in[0,T]$}.
\end{equation}
As before, we require that the extended $H(t)$ is a $C^2$ function
on $[-L,T+L]$.

Let
\begin{equation} \label{eq:6.10}
\tilde{y}(t,x)=y(t,x)-\displaystyle\frac{1}{2}\int_{t-x}^{t+x}{H(\xi)}d\xi.
\end{equation}
Then by \eqref{eq:6.9}, the problem \eqref{eq:6.2} becomes
\begin{equation} \label{eq:6.11}
\begin{cases}
\tilde{y}_{tt}-\tilde{y}_{xx} = 0,\\
\tilde{f}(x) \triangleq\tilde{y}(0,x) = f(x)-\displaystyle\frac{1}{2}\int_{-x}^{x}{H(\xi)}d\xi,\\
\tilde{g}(x) \triangleq\tilde{y}(T,x) = g(x)-\displaystyle\frac{1}{2}\int_{T-x}^{T+x}{H(\xi)}d\xi,\\
\tilde{H}(t) \triangleq\tilde{y}_x(t,0) = 0,\\
\tilde{K}(t) \triangleq\tilde{y}_x(t,L) = 0.
\end{cases}
\end{equation}
And by \eqref{eq:6.4}, \eqref{eq:6.9}, the boundary data
$\tilde{f}$, $\tilde{g}$, $\tilde{H}$ and $\tilde{K}$ satisfy the
compatibility conditions
\begin{equation} \label{eq:6.12}
\begin{cases}
\tilde{f}'(0) = \tilde{f}'(L) = 0,\\
\tilde{g}'(0) = \tilde{g}'(L) = 0.
\end{cases}
\end{equation}
Therefore, we can make use of Theorem \ref{thm:5.1} (B) and obtain
the $C^2$ solution to the problem \eqref{eq:6.11}, and then the
$C^2$ solution to \eqref{eq:6.2}. This proves the part (B) in
Theorem \ref{thm:6.1}.

Thus, the proof of Theorem \ref{thm:6.1} is completed.
\end{proof}

\begin{Remark} \label{rem:6.1}
Theorem \ref{thm:6.1} shows that the wave equation defined on the
domain $[0,T]\times [0,L]$ still possesses the exact controllability
even in the cases of inhomogeneous Dirichlet boundary conditions and
inhomogeneous Neumann boundary conditions, provided that the
corresponding compatibility conditions are satisfied.
\end{Remark}

\section{Some nonlinear wave equations}\label{sec:7}

This section is devoted to the global exact controllability for some
nonlinear wave equations including a wave map equation arising from
geometry and the equations for the motion of relativistic strings in
the Minkowski space-time $\mathds{R}^{1+n}$.

\subsection{A wave map equation}

The theory of wave maps plays an important role in both mathematics
and theoretical physics. The wave map equation is highly
geometrical, and can be rewritten in many different ways. It is also
related to the Einstein equations in general relativity. In this
subsection, we consider the following TBVP for a kind of wave map
equation\footnote{In fact, the equation in \eqref{eq:7.1} is nothing
but the wave map from the Minkowski space $\mathds{R}^{1+1}$ to the
Riemmannian manifold $(\mathds{R},g)$, where $ds^2=gdy^2\triangleq
e^{-2y}dy^2$.}
\begin{equation} \label{eq:7.1}
\begin{cases}
y_{tt} - y_{xx} = y_t^2-y_x^2,\\
y(0,x) = f(x),\\
y(T,x) = g(x),
\end{cases}
\end{equation}
where $T$ is a given positive real number, and $f(x),g(x)$ are two
given $C^2$-smooth functions which stand for the initial and
terminal states, respectively.

By making the following transformation on the unknown
\begin{equation} \label{eq:7.2}
z(t,x) = e^{-y(t,x)},
\end{equation}
the TBVP \eqref{eq:7.1} can be rewritten as
\begin{equation} \label{eq:7.3}
\begin{cases}
z_{tt} - z_{xx} = 0,\\
z(0,x) = e^{-f(x)}\triangleq \hat{f}(x),\\
z(T,x) = e^{-g(x)}\triangleq \hat{g}(x),
\end{cases}
\end{equation}
Obviously, the TBVP \eqref{eq:7.1} has a $C^2$-smooth solution on
the strip $[0,T]\times\mathds{R}$ if and only if the TBVP
\eqref{eq:7.3} has a $C^2$-smooth positive solution on
$[0,T]\times\mathds{R}$. Therefore, in order to prove the existence
of a $C^2$-smooth solution of the TBVP \eqref{eq:7.1} on
$[0,T]\times\mathds{R}$, it suffices to show the existence of a
$C^2$-smooth positive solution of the TBVP \eqref{eq:7.3} on this
domain. In fact, if so, $y(t,x) = -\ln{z(t,x)}$ is a $C^2$-smooth
solution to the TBVP \eqref{eq:7.1}.

To do so, we suppose that
\begin{equation} \label{eq:7.4}
\inf{f(x)} > \sup{g(x)}.
\end{equation}
Similar to \eqref{eq:2.2} and \eqref{eq:2.3}, we introduce
\begin{equation} \label{eq:7.5}
\tilde{z}(t,x) = z(t,x) -
\displaystyle\frac{\hat{f}(x-t)+\hat{f}(x+t)}{2}
\end{equation}
and
\begin{align} \label{eq:7.6}
\tilde{f}(x) &= \hat{g}(x) -
\displaystyle\frac{\hat{f}(x-T)+\hat{f}(x+T)}{2} \nonumber
\\
&= e^{-g(x)} - \frac{1}{2}\left[e^{-f(x-T)}+e^{-f(x+T)}\right] > 0.
\end{align}
In \eqref{eq:7.6}, we have made use of the assumption
\eqref{eq:7.4}. Thus, the TBVP \eqref{eq:7.3} can be rewritten as
\begin{equation} \label{eq:7.7}
\begin{cases}
\tilde{z}_{tt} - \tilde{z}_{xx} = 0,\\
\tilde{z}(0,x) = 0,\\
\tilde{z}(T,x) = \tilde{f}(x).
\end{cases}
\end{equation}

As shown in Section \ref{sec:2}, we may construct a non-negative
$C^1$ function $u(x)$ defined on the interval $[-T, T]$ which
satisfies the condition \eqref{eq:2.8}, and define an initial
velocity $v(x)$ as shown in \eqref{eq:2.9}. By d'Alembert formula,
the solution of the Cauchy problem for the wave equation in
\eqref{eq:7.7} with the initial data
\begin{equation} \label{eq:7.8}
\tilde{z}(0,x) = 0,\quad \tilde{z}_t(0,x) = v(x)
\end{equation}
reads
\begin{equation} \label{eq:7.9}
\tilde{z}(t,x) =
\displaystyle\frac{1}{2}\int_{x-t}^{x+t}{v(\tau)}d\tau.
\end{equation}
It follows from \eqref{eq:7.9}, \eqref{eq:7.5} and the fact that
$\hat{f}(x)>0$ that the TBVP \eqref{eq:7.3} has a $C^2$-smooth
positive solution $z=z(t,x)$, provided that $v(x)\geq 0$.

By the above argument, the key point to show the existence of a
$C^2$-smooth positive solution of the TBVP \eqref{eq:7.3} on
$[0,T]\times\mathds{R}$ is to construct a non-negative initial
velocity $v(x)$. By \eqref{eq:2.9}, we have
\begin{Lemma} \label{lem:7.1}
Suppose that the function $\tilde{f}(x)$ defined by \eqref{eq:7.6}
satisfies
\begin{equation} \label{eq:7.10}
\begin{cases}
\displaystyle\sum_{i=1}^N\tilde{f}'\big(x-(2i-1)T\big)\geq 0, \quad \forall \;x> T,\\
\displaystyle\sum_{i=1}^N\tilde{f}'\big(x+(2i-1)T\big)\leq 0, \quad
\forall \;x<-T.
\end{cases}
\end{equation}
Then the function $v(x)$ defined by \eqref{eq:2.9} is non-negative.
\end{Lemma}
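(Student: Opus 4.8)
The plan is to read off the sign of $v$ directly from its closed form \eqref{eq:2.9}, using that the auxiliary function $u$ has already been chosen non-negative in the construction preceding the lemma. The whole argument is a case analysis on the sign of $x$ together with the value of the integer $N=\big[(|x|+T)/2T\big]$, and no analytic estimates should be needed.

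First I would dispose of the central strip $|x|\le T$. For $0\le x<T$ and for $-T\le x<0$ one computes $N=0$, so the sums in \eqref{eq:2.9} are empty and $v(x)=u(x)$; since $u\geq 0$ on $[-T,T]$ this already gives $v(x)\geq 0$ there.

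Next, for $x>T$ one has $N\geq 1$, and the key bookkeeping step is to check that the shifted argument lands in the domain of $u$: from $2NT\le x+T<2NT+2T$ it follows that $x-2NT\in[-T,T)$, so $u(x-2NT)\geq 0$. The first hypothesis in \eqref{eq:7.10} then makes the summation term $2\sum_{i=1}^N\tilde{f}'\big(x-(2i-1)T\big)$ non-negative, and adding the two non-negative pieces yields $v(x)\geq 0$. The region $x<-T$ is handled symmetrically: there $x+2NT\in(-T,T]$, so $u(x+2NT)\geq 0$, while the second hypothesis in \eqref{eq:7.10} gives $-2\sum_{i=1}^N\tilde{f}'\big(x+(2i-1)T\big)\geq 0$, whence $v(x)\geq 0$ again.

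I expect the only delicate points to be the index bookkeeping just described, namely confirming that $x\mp 2NT$ always falls inside $[-T,T]$ so that the non-negativity of $u$ is genuinely available, together with matching the summation index $N$ in \eqref{eq:7.10} with the one appearing in \eqref{eq:2.9}. The junctions $x=\pm T$, where $N$ jumps, should cause no trouble: $v$ is already known to be $C^1$ from Section \ref{sec:2}, so its value there agrees with the one-sided limits and inherits the non-negativity proved on either side. Everything else reduces to a direct comparison of signs.
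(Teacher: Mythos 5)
Your proof is correct, and in fact the paper offers no argument at all (it states ``Lemma \ref{lem:7.1} is obvious, here we omit its proof''); your verification---that $x\mp 2NT$ always lands in $[-T,T]$ so the non-negativity of $u$ applies, while the hypothesis \eqref{eq:7.10} makes the summation terms in \eqref{eq:2.9} have the right sign---is exactly the intended, straightforward sign-check. The one small slip, that $N=1$ rather than $0$ at $x=-T$, is harmless because your continuity remark at the junctions $x=\pm T$ already covers those points.
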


Lemma \ref{lem:7.1} is obvious, here we omit its proof.

Summarizing the above argument yields the following theorem.

\begin{Theorem} \label{thm:7.1}
Suppose that $f(x)$ and $g(x)$ are two $C^2$-smooth functions and
satisfy \eqref{eq:7.4} and \eqref{eq:7.10}. Then the TBVP
\eqref{eq:7.1} admits a $C^2$-smooth solution $y = y(t,x)$ defined
on the strip $[0,T]\times \mathds{R}$.
\end{Theorem}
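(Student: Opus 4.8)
The plan is to exploit the exponential substitution already introduced in \eqref{eq:7.2}, which converts the nonlinear wave map equation into the homogeneous linear wave equation at the cost of demanding a strictly \emph{positive} solution. Concretely, since $y=-\ln z$ recovers a genuine $C^2$ solution of \eqref{eq:7.1} precisely when $z=e^{-y}$ is a positive $C^2$ solution of the linear TBVP \eqref{eq:7.3}, the whole problem reduces to producing such a positive solution. This reframes a nonlinear controllability question as a linear one subject to an inequality constraint on the solution, and it is this constraint that the extra hypotheses \eqref{eq:7.4} and \eqref{eq:7.10} are designed to enforce.

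First I would repeat the reduction of Section~\ref{sec:2}: subtracting the free d'Alembert evolution of the initial datum $\hat f$ via \eqref{eq:7.5} turns \eqref{eq:7.3} into the TBVP \eqref{eq:7.7}, which has zero initial data and terminal datum $\tilde f$. Here the hypothesis \eqref{eq:7.4}, namely $\inf f>\sup g$, is exactly what forces $\tilde f(x)>0$ in \eqref{eq:7.6}, so the target terminal state is strictly positive. Next I would invoke the explicit construction of the initial velocity $v(x)$ from \eqref{eq:2.9} together with the d'Alembert representation \eqref{eq:7.9}; by Theorem~\ref{thm:2.1} this already delivers a $C^2$ solution $\tilde z$ of \eqref{eq:7.7}, and hence a $C^2$ solution $z$ of \eqref{eq:7.3}. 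At this stage existence and regularity are settled, and the only remaining issue is the sign of $z$.

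The decisive step is the positivity of $z$, and I expect it to be the main obstacle. Writing $z=\tilde z+\tfrac{1}{2}\big(\hat f(x-t)+\hat f(x+t)\big)$ with $\hat f=e^{-f}>0$, it suffices to guarantee $\tilde z\geq 0$ on the whole strip, and since $\tilde z(t,x)=\tfrac{1}{2}\int_{x-t}^{x+t}v(\tau)\,d\tau$, this follows at once from $v\geq 0$. The difficulty is that the constructed velocity \eqref{eq:2.9} mixes the freely chosen datum $u$ on $[-T,T]$ with telescoping sums of $\tilde f'$, whose signs are not automatic; controlling them is exactly where Lemma~\ref{lem:7.1} enters. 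Choosing $u\geq 0$ (as noted after \eqref{eq:7.7}) handles the first term, while the monotonicity-type conditions \eqref{eq:7.10} on the partial sums of $\tilde f'$ render the summation terms non-negative, so that $v\geq 0$. Chaining these four facts together---linearization, reduction to zero initial data, the Section~\ref{sec:2} construction, and the sign control furnished by Lemma~\ref{lem:7.1}---yields a positive $C^2$ solution $z$, and therefore $y=-\ln z$ is the desired $C^2$ solution of \eqref{eq:7.1} on the strip $[0,T]\times\mathds{R}$, which completes the argument.
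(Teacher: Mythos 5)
Your proposal follows essentially the same route as the paper's own proof: the substitution $z=e^{-y}$ reducing \eqref{eq:7.1} to the linear TBVP \eqref{eq:7.3}, the shift \eqref{eq:7.5}--\eqref{eq:7.6} (with \eqref{eq:7.4} giving $\tilde f>0$) to reach \eqref{eq:7.7}, the Section~\ref{sec:2} construction of $v$ with a non-negative $u$, and Lemma~\ref{lem:7.1} together with \eqref{eq:7.10} to force $v\geq 0$, hence $z>0$ and $y=-\ln z$ is the desired $C^2$ solution. The argument and its ordering match the paper's, so no further comparison is needed.
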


In order to understand the condition \eqref{eq:7.10} clearly, we
present the following two examples.

\begin{Example} Choose the functions $f(x)$ and $g(x)$ such that the
function $\tilde{f}(x)$ defined by \eqref{eq:7.6} satisfies
\begin{equation} \label{eq:7.11}
\begin{cases}
\tilde{f}'(x)\geq 0, \quad \forall \;x> 0,\\
\tilde{f}'(x)\leq 0, \;\quad \forall \;x< 0.
\end{cases}
\end{equation}
In this case, it is easy to see that $\tilde{f}(x)$ satisfies the
assumption \eqref{eq:7.10}, and then the TBVP \eqref{eq:7.1} has a
$C^2$-smooth solution on the domain $[0,T]\times \mathds{R}$. For
example, we may choose $f(x)$ and $g(x)$ satisfying
$$\tilde{f}(x) = x^{2n} + c,$$ where
$\tilde{f}(x)$ is defined by \eqref{eq:7.6}, $c$ is a constant and
$n$ is a positive integer.
\end{Example}

\begin{Example} Choose the functions $f(x)$ and $g(x)$ such that the
function $\tilde{f}(x)$ defined by \eqref{eq:7.6} satisfies
\begin{equation} \label{eq:7.12}
\begin{cases}
\tilde{f}'(x)\geq 0, \quad \forall \; x\in [0, 2T], \\
\tilde{f}'(x) = -\tilde{f}'(x+2T).
\end{cases}
\end{equation}
In the present situation, we have
\begin{equation} \label{eq:7.13}
\displaystyle\sum_{i=1}^N\tilde{f}'\big(x-(2i-1)T\big)
\begin{cases}
\geq 0, \; if \; N \; is \; odd, \\
=0, \; if \; N  \; is  \; even,
\end{cases}\quad if\;\; x\geq 0
\end{equation}
and
\begin{equation} \label{eq:7.14}
\displaystyle\sum_{i=1}^N\tilde{f}'\big(x+(2i-1)T\big)
\begin{cases}
\leq 0, \; if \; N \; is \; odd, \\
=0, \; if \; N  \; is  \; even,
\end{cases}  if \;\; x < 0.
\end{equation}
This implies that $\tilde{f}(x)$ satisfies the assumption
\eqref{eq:7.10}, and then the TBVP \eqref{eq:7.1} has a $C^2$-smooth
solution on the domain $[0,T]\times \mathds{R}$. As an example, we
may choose $f(x)$ and $g(x)$ satisfying
$$\tilde{f}'(x) =
\displaystyle\frac{\pi}{2T}\sin{\frac{\pi x}{2T}},\quad
\text{i.e.,}\quad \tilde{f}(x) = -\displaystyle\cos{\frac{\pi
x}{2T}} + c,$$ where $\tilde{f}(x)$ is defined by \eqref{eq:7.6} and
$c$ is a constant.
\end{Example}

\subsection{The equations for the motion of relativistic strings in
the Minkowski space-time $\mathds{R}^{1+n}$}

Let $X=(t,x_1,\cdots,x_n)$ be a position vector of a point in the
$(1+n)$-dimensional Minkowski space $\mathbb{R}^{1+n}$. Consider the
motion of a relativistic string and let $x_i=x_i(t,\theta)\;
(i=1,\cdots,n)$ be the local equation of its world surface, where
$(t,\theta)$ are the the surface parameters. The equations governing
the motion of the string read (see \cite{kong4})
\begin{equation}\label{eq:7.2.1}
|x_{\theta}|^2x_{tt}-2\langle x_t,x_{\theta}\rangle
x_{t\theta}+(|x_t|^2-1)x_{\theta\theta}=0.
\end{equation}
The system \eqref{eq:7.2.1} contains $n$ nonlinear partial
differential equations of second order. These equations also
describe the extremal surfaces in the Minkowski space
$\mathbb{R}^{1+n}$. Kong et al \cite{kong4} considered the Cauchy
problem for the equations \eqref{eq:7.2.1} with the following
initial data
\begin{equation}\label{eq:7.2.2}
t=0:\;\;x=p(\theta),\quad x_t=q(\theta),
\end{equation}
where $p$ is a given $C^2$ vector-valued function and $q$ is a given
$C^1$ vector-valued function. The Cauchy problem
\eqref{eq:7.2.1}-\eqref{eq:7.2.2} describes the motion of a free
relativistic string in the Minkowski space $\mathbb{R}^{1+n}$ with
the initial position $p(\theta)$ and initial velocity (in the
classical sense) $q(\theta)$. In particular, when $p(\theta)$ and
$q(\theta)$ are periodic, the string under consideration is a closed
one. It has shown that the global smooth solution of the Cauchy
problem \eqref{eq:7.2.1}-\eqref{eq:7.2.2} exists and is unique (see
\cite{kong4}). On the other hand, it is well known that closed form
representations of solutions for partial differential equations are
very important and fundamental in both mathematical analysis and
physical understanding. Unfortunately, nonlinear partial
differential equations in general do not possess representations of
solutions in closed form. Surprisingly, in the paper \cite{kong3},
we discovered a general solution formula in closed form for the
nonlinear wave equations \eqref{eq:7.2.1}. By introducing a new
concept of {\it generalized time-periodic function}, we proved that,
if the initial data is periodic, then the smooth solution of the
Cauchy problem \eqref{eq:7.2.1}-\eqref{eq:7.2.2} is {\it generalized
time-periodic}, namely, the space-periodicity also implies the
time-periodicity. This fact yields an interesting physical
phenomenon: {\it the motion of closed strings is always generalized
time-periodic}.

However, up to now there is not any result on the TBVP for the
equations \eqref{eq:7.2.1}. In this subsection we investigate this
problem and prove the global exact controllability of the equations
\eqref{eq:7.2.1}.

By \cite{kong0}, under a very natural assumption which is a
necessary and sufficient condition guaranteeing the motion is
physical, there exists a globally diffeomorphic transformation of
variables
\begin{equation}\label{eq:7.2.3}
\tau = t,\quad \vartheta = \vartheta(t,\theta)
\end{equation}
such that the system \eqref{eq:7.2.1} become
\begin{equation}\label{eq:7.2.4}
\tilde{x}_{\tau\tau}-\tilde{x}_{\vartheta\vartheta}=0,
\end{equation}
where
\begin{equation}\label{eq:7.2.5}
\tilde{x}(\tau,\vartheta)=x\big(t(\tau,\vartheta),\theta(\tau,\vartheta)\big),
\end{equation}
in which $t=t(\tau,\vartheta),\;\theta=\theta(\tau,\vartheta)$ is
the inverse of the transformation \eqref{eq:7.2.3}.

By Theorem \ref{thm:2.1}, the system \eqref{eq:7.2.4} possesses the
global exact controllability. Noting the transformation
\eqref{eq:7.2.3} is globally diffeomorphic, we obtain the global
exact controllability of the system \eqref{eq:7.2.1}.

In physics, the global exact controllability of the system
\eqref{eq:7.2.1} implies some interesting physical phenomena. For
example, if we take the periodic initial data $p(\theta)$ which
stands for a closed string, and a non-periodic terminal condition
(e.g., $x(T,\theta)=\theta$) which denotes an open string, then the
global exact controllability of the system \eqref{eq:7.2.1} means
that a closed string may become an open string. This fact implies
that the topological structure of the string may change in its
motion process.

\section{Hyperbolic curvature flow and Gage-Hamilton's theorem} \label{sec:8}
Let $\gamma(t)$ be a one parameter family of closed convex smooth
curves in the plane. The position vector $X(t,s)$ parameterizes the
curve and the curvature is $k(t,s)$. The hyperbolic curvature flow
is described by the following wave equation
\begin{equation} \label{eq:8.1}
k_{tt} - k_{ss} = 0,
\end{equation}
where the subscript $\nu$ denotes partial differentiation with
respect $\nu$.

Consider the TBVP for the wave equation \eqref{eq:8.1} with the
following initial condition and terminal condition
\begin{equation} \label{eq:8.2}
k(0,s) = f(s) , \quad k(T,s)=k_0 >0,
\end{equation}
where $f(s)$ is a given non-negative periodic smooth function with
the period $L$, $T$ is a positive real number, and $k_0$ is a
positive constant. We have
\begin{Theorem} \label{thm:8.1}
There exists a positive constant $k_0$ such that the TBVP
\eqref{eq:8.1}-\eqref{eq:8.2} admits a non-negative smooth solution
$k=k(t,s)$, provided that $\displaystyle\frac{T}{L}$ is a rational
number with $\displaystyle\frac{2T}{L}\not\in \mathds{N}$.
\end{Theorem}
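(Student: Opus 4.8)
The plan is to reduce everything to Theorem \ref{thm:1-2.1} and then exploit the freedom in the constant $k_0$ together with a single linear-in-$t$ correction to force non-negativity. First I would note that the terminal datum in \eqref{eq:8.2} is the \emph{constant} $k_0$, which is trivially an $L$-periodic smooth function, and likewise that the constant $0$ is $L$-periodic and smooth. Applying Theorem \ref{thm:1-2.1} to the TBVP with initial datum $f$ and terminal datum $0$ (the hypotheses $T/L\in\mathbb{Q}$ and $2T/L\notin\mathds{N}$ are exactly those assumed here), I obtain an $L$-periodic $C^2$ solution $w=w(t,s)$ of $w_{tt}-w_{ss}=0$ with $w(0,s)=f(s)$ and $w(T,s)=0$. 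Since $f$ is smooth its Fourier coefficients decay faster than any power of $k$, and the lower bound \eqref{eq:4.22} shows that the coefficients $\beta_k,\bar\beta_k$ produced in the proof of Theorem \ref{thm:1-2.1} are controlled by those of $f$ (up to the fixed factor $C_s^{-1}$); hence every term-by-term differentiated series converges uniformly and $w$ is in fact smooth.

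The key observation is that, for an arbitrary constant $k_0$, the function
\begin{equation}
k(t,s)=w(t,s)+\frac{k_0}{T}\,t
\end{equation}
is again annihilated by $\partial_t^2-\partial_s^2$, because $\frac{k_0}{T}t$ is; moreover $k(0,s)=w(0,s)=f(s)$ and $k(T,s)=w(T,s)+k_0=k_0$. Thus $k$ is a smooth solution of the TBVP \eqref{eq:8.1}--\eqref{eq:8.2} for \emph{every} value of $k_0$. The linear term is chosen precisely so that it raises the constant terminal value without disturbing the initial value, which decouples solvability from the non-negativity requirement and leaves $k_0$ as a free parameter to be tuned.

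It then remains to choose $k_0$ so that $k\geq 0$ on the whole strip. Since $w$ is $C^1$ and $L$-periodic in $s$, its $t$-derivative attains a finite bound $C_1:=\sup_{[0,T]\times[0,L]}|w_t|<\infty$. For every $(t,s)$ with $t\in[0,T]$, using $f\geq 0$, I would estimate
\begin{equation}
w(t,s)=f(s)+\int_0^t w_t(\tau,s)\,d\tau\geq f(s)-C_1 t\geq -C_1 t,
\end{equation}
so that $k(t,s)\geq\big(\tfrac{k_0}{T}-C_1\big)t\geq 0$ as soon as $k_0\geq TC_1$. Taking $k_0=\max\{TC_1,\,1\}>0$ therefore produces a positive constant for which $k$ is a non-negative smooth solution, which is exactly the assertion of the theorem.

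The hard part will not be the algebra but the two regularity facts on which it rests: the uniform bound $C_1<\infty$ for $w_t$ and the upgrade of $w$ from $C^2$ to $C^\infty$. Both follow from the convergence estimates already proved for Theorem \ref{thm:1-2.1} (the uniform lower bound $C_s$ on $|\sin(2k\pi T/L)|$ off the multiples of $q$, combined with the rapid decay of the Fourier coefficients of the smooth datum $f$), so I expect the remaining steps to be routine. I would emphasize that the construction uses nothing about $f$ beyond smoothness and $f\geq 0$, so the admissible threshold for $k_0$ is governed solely by the initial velocity $w_t(0,\cdot)$ of the auxiliary solution.
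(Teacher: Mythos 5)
Your proposal is correct, and its skeleton is the same as the paper's: both proofs invoke Theorem \ref{thm:1-2.1} with a \emph{constant} terminal datum and then add a function linear in $t$ --- harmless for the wave equation and for the initial condition --- to buy non-negativity, exploiting the fact that $k_0$ is at our disposal. The differences lie in how non-negativity is certified. The paper solves the TBVP with terminal value $k_*$, reads off the initial velocity $v(s)=k_t(0,s)$, and when $M=\min_s v(s)<0$ replaces $k$ by $k+|M|t$ (so $k_0=k_*+|M|T$); non-negativity then follows from d'Alembert's formula, since the corrected initial velocity $v+|M|$ and the initial position $f$ are both non-negative (this tacitly uses uniqueness of the Cauchy problem to identify the Fourier-series solution with the d'Alembert representation). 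You instead solve with terminal value $0$ and add $(k_0/T)\,t$, certifying non-negativity by the cruder bound $w\ge f-C_1t\ge -C_1t$ with $C_1=\sup|w_t|$ over the compact period cell, then taking $k_0\ge TC_1$. Your route avoids d'Alembert and the paper's case split on the sign of $M$, at the price of a larger threshold for $k_0$ ($T\sup|w_t|$ versus $|M|T$; indeed when $M\ge 0$ the paper gets any $k_*>0$). One point where your write-up is more complete than the paper's: Theorem \ref{thm:1-2.1} only delivers a $C^2$ solution, while Theorem \ref{thm:8.1} asserts a \emph{smooth} one; the paper's proof is silent on this upgrade, whereas your rapid-Fourier-decay argument (smooth $f$, the lower bound \eqref{eq:4.22}, and $\beta_k=\bar\beta_k=0$ on the modes with $2kT/L\in\mathds{N}$) legitimately promotes $w$ to $C^\infty$. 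Both proofs, of course, rely on Theorem \ref{thm:1-2.1} in exactly the same way, so any reservation about that theorem's construction affects them equally.
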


\begin{Remark} \label{rem:8.1}
By the basic theorem on plane curves, Theorem \ref{thm:8.1} implies
that, under the hyperbolic curvature flow, any closed convex smooth
curve in the plane can evolve into a circle in a finite time, and
remains convex for sequent times. This is a result analogous to the
well-know theorem of Gage and Hamilton for the mean curvature flow
of plane curves.
\end{Remark}

We now prove Theorem \ref{thm:8.1}.

\begin{proof} Consider the TBVP for the wave equation \eqref{eq:8.1} with the
following initial condition and terminal condition
\begin{equation} \label{eq:8.3}
k(0,s) = f(s) , \quad k(T,s)=k_*,
\end{equation}
where $k_*$ is an arbitrary given positive constant. By Theorem
\ref{thm:1-2.1}, the TBVP \eqref{eq:8.1}, \eqref{eq:8.3} has a
global $L$-periodic solution $k = k(t, s)$. Define
\begin{equation} \label{eq:8.4}
v(s) =  \partial_t k(0, s).
\end{equation}
Clearly, $v(s)$ is the initial velocity corresponding to the
solution $k=k(t,s)$. That is to say, the solution of the Cauchy
problem for the wave equation \eqref{eq:8.1} with the initial data
$k(0,s)=f(s),\;k_t(0,s)=v(s)$ is nothing but $k=k(t,s)$, moreover
this solution satisfies the terminal data $k(T,s)=k_*$.

Notice that $v(s)$ is $L$-periodic. Let
\begin{equation} \label{eq:8.5}
M = \min_{s\in[0,L]}{v(s)}.
\end{equation}
If $M\geq0$, then by D'Alembert formula
\begin{equation} \label{eq:8.6}
k(t,s)=\displaystyle\frac{f(s+t)+f(s-t)}{2}+\displaystyle\frac{1}{2}\int_{s-t}^{s+t}v(\tau)d\tau\geq0,\quad\forall
\;(t,s)\in [0,T]\times\mathds{R}.
\end{equation}
Taking $k_0=k_*$, we finish the proof of Theorem \ref{thm:8.1}.

Otherwise, it holds that $v(s)+|M|\geq 0$. Thus, it is easy to see
that $\bar{k}\triangleq k(t,s)+|M|t$ is a global $L$-periodic smooth
solution of the TBVP
\begin{equation} \label{eq:8.7}
\begin{cases}
\bar{k}_{tt}(t, s) - \bar{k}_{ss}(t, x) = 0,\\
\bar{k}(0, s) = f(s),\\
\bar{k}(T, s) = k_* + |M|T.
\end{cases}
\end{equation}
Obviously, the initial velocity corresponding to the solution
$\bar{k}(t,s)$ reads
\begin{equation} \label{eq:8.8}
\bar{v}(s)\triangleq\partial_t \bar{k}(0, s)=v(s)+|M| \geq 0.
\end{equation}
This implies that the solution is non-negative, i.e., $\bar{k}(t,
s)\geq 0$ for all $(t,s)\in [0,T]\times\mathds{R}$. Taking $k_0=k_*+
|M|T$ gives the conclusion of Theorem \ref{thm:8.1}. Thus, the proof
of Theorem \ref{thm:8.1} is completed.
\end{proof}

\section{Summary and discussions} \label{sec:9}
In the present paper we introduce three new concepts for
second-order hyperbolic equations, they read two-point boundary
value problem, global exact controllability and exact
controllability, respectively. These second-order hyperbolic
equations considered here include many important partial
differential equations arising from both theoretical aspects and
applied fields, e.g., mechanics (fluid mechanics and elasticity),
physics, engineering, control theory and geometry, etc., the typical
examples are wave equation, hyperbolic Monge-Amp\`{e}re equation,
wave map. For several kinds of important linear and nonlinear wave
equations, in this paper we prove the existence of smooth solutions
of the two-point boundary value problems and show the global exact
controllability of these wave equations. In particular, we
investigate the two-point boundary value problem for one-dimensional
wave equation defined on a closed curve and prove the existence of
smooth solution, this implies the exact controllability of this kind
of wave equation. Furthermore, based on this, we study the two-point
boundary value problems for the wave equation defined on a strip
with Dirichlet or Neumann boundary conditions and show that the
equation still possesses the exact controllability in these cases.
Finally, as an application of Theorem \ref{thm:1-2.1}, we introduce
the hyperbolic curvature flow and prove a result analogous to the
well-known theorem of Gage and Hamilton \cite{gh} for the curvature
flow of plane curves. This result can be viewed as a simple
application of hyperbolic partial differential equation to both
geometry and topology.

Usually, ``two-point boundary value problem" has another meaning: it
applies to a boundary problem for a second order ODE in a bounded
interval. The present paper generalizes this concept to the case of
second-order hyperbolic partial differential equations. The
two-point boundary value problem for partial differential equations
is a new research topic. According to the authors' knowledge, up to
now, few of results on the two-point boundary value problems for
partial differential equations, in particular, hyperbolic partial
differential equations (even for linear or nonlinear wave equations)
have been known. Therefore, the present paper can be viewed as the
first work in this new research topic.

It is well-known that, there are a lot of deep and beautiful results
on the two-point boundary value problems for ordinary differential
equations and for second-order differential inclusions. On the other
hand, there are many important results on the {\it boundary} control
problems for wave equations and hyperbolic systems. However, the
two-point boundary value problems and the boundary control problems
are essentially different two kinds of problems. Both of them play
an important role in both theoretical and applied aspects.

The main aim of this paper is to introduce the concept ``two-point
boundary value problem" for second-order hyperbolic equations and to
show the global exact controllability or exact controllability for
several kinds of important linear and nonlinear wave equations.
Although the results obtained in this paper are restrictive in some
sense (they only apply to cases in which the solution to the Cauchy
problem can be found explicitly: linear (classical) wave equations
and their reformulations), these results shed further light on the
study of this new research topic. In the future we will investigate
the following open problems which seem to us more interesting and
important: (i) what happens if we consider a general equation of the
form $$y_{tt}-y_{xx} = f(y)$$ with a regular and bounded function
$f(y)$? (ii) what happens if we include regular coefficients and/or
lower order terms? (iii) what happens if we consider a quasilinear
wave equation of the form $$y_{tt}-(\mathscr{C}(y_{x}))_x = 0$$ with
a smooth and increasing function $\mathscr{C}(\nu)$, e.g.,
$\mathscr{C}(\nu)=\frac{\nu}{\sqrt{1+\nu^2}}$? (iv) what happens if
we consider other models, particularly some nonlinear models arising
from applied fields such as control theory, fluid dynamics,
elasticity as well as engineering, etc.

\vskip 4mm

\noindent{{\bf Acknowledgements.}} This work was supported in part
by the NNSF of China (Grant No. 10971190) and the Qiu-Shi Chair
Professor Fellowship from Zhejiang University.

\end{document}